\theoremstyle{definition}
\newtheorem{thm}{Theorem}[section]
\newtheorem{dfn}[thm]{Definition}
\newtheorem{note-dfn}[thm]{Notation-Definition}
\newtheorem{exam}[thm]{Example}
\newtheorem{prop}[thm]{Proposition}
\newtheorem{cor}[thm]{Corollary}
\newtheorem{lem}[thm]{Lemma}
\newtheorem{rem}[thm]{Remark}
\newtheorem{prop-dfn}[thm]{Proposition-Definition}
\newcommand{\m}{\mathfrak{m}}
\newcommand{\N}{\mathbb{N}}
\newcommand{\Z}{\mathbb{Z}}
\newcommand{\Q}{\mathbb{Q}}
\newcommand{\F}{\mathbb{F}}
\newcommand{\Hom}{\mathrm{Hom}}
\newcommand{\mIm}{\mathrm{Im}}
\newcommand{\Spec}{\mathrm{Spec}\,}
\newcommand{\sep}{^{\mathrm{sep}}}
\title{On homotopy exact sequences for normal schemes}
\author{Ippei Nagamachi}
\date{}
\begin{document}

\maketitle

\begin{abstract}
Let $f : X \rightarrow S$ be a surjective morphism of finite type between connected locally Noetherian normal schemes.
We discuss sufficient conditions that the sequence of the \'etale fundamental groups
$$\pi_{1}(X\times_{S}\overline{\eta},\ast) \rightarrow \pi_{1}(X,\ast) \rightarrow \pi_{1}(S,\ast)\rightarrow 1$$
is exact, where $\overline{\eta}$ is a geometric generic point of $S$ and $\ast$ is a geometric point of $X\times_{S}\overline{\eta}$.
In the present paper, we generalize those in \cite{SGA1}, \cite{Ho}, and \cite{Mit}.
We show that the conditions we give are also necessary conditions in the case where, for instance, $S$ is an affine smooth curve over a field of characteristic $0$.
\end{abstract}

\tableofcontents

\setcounter{section}{-1}

\section{Introduction}
Let $f : X \rightarrow S$ be a surjective morphism of finite type between connected locally Noetherian normal schemes, $\overline{\eta}$ a geometric generic point of $S$, and $\ast$ a geometric point of $X\times_{S}\overline{\eta}$.
Suppose that the scheme $X\times_{S}\overline{\eta}$ is connected.
Consider the sequence of the \'etale fundamental groups
\begin{equation}
\pi_{1}(X\times_{S}\overline{\eta},\ast) \rightarrow \pi_{1}(X,\ast) \rightarrow \pi_{1}(S,\ast)\rightarrow 1
\label{introexact}
\end{equation}
In \cite{SGA1}, the following proposition is proved:
\begin{prop}(\cite{SGA1} Exp.X Corollaire 1.4)
Suppose that $f$ is proper and flat with geometrically reduced fibers.
Moreover, suppose that $f_{\ast}O_{X}=O_{S}$.
Then the sequence (\ref{introexact}) is exact.
\label{SGAhom}
\end{prop}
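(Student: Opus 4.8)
The plan is to reduce the exactness of (\ref{introexact}) to three statements about finite \'etale covers, via the standard dictionary between profinite groups and Galois categories, and then to verify each using properness, flatness, and the fibral hypotheses. Writing $u : \pi_{1}(X\times_{S}\overline{\eta}) \to \pi_{1}(X)$ and $v : \pi_{1}(X) \to \pi_{1}(S)$, exactness amounts to proving: (i) $v$ is surjective; (ii) $v\circ u$ is trivial; and (iii) $\ker v \subseteq \operatorname{im} u$ (the reverse inclusion $\operatorname{im} u \subseteq \ker v$ being (ii)). Condition (ii) is immediate: the morphism $X\times_{S}\overline{\eta} \to S$ factors through the geometric point $\overline{\eta}$, whose fundamental group is trivial, so $v\circ u$ is trivial.

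For (i), the Galois-category criterion says $v$ is surjective iff $X\times_{S}S'$ is connected for every connected finite \'etale $S' \to S$. First I would record that $f$ has geometrically connected fibers: since $f$ is proper and flat with geometrically reduced fibers, cohomology and base change gives $H^{0}(X_{\overline{s}},\mathcal{O}) = (f_{\ast}\mathcal{O}_{X})_{\overline{s}} = k(\overline{s})$ using $f_{\ast}\mathcal{O}_{X}=\mathcal{O}_{S}$, so each geometric fiber is connected. The properties (proper, flat, geometrically connected fibers, and $(\,\cdot\,)_{\ast}\mathcal{O}=\mathcal{O}$) are preserved by the flat base change $S' \to S$, and a proper surjective morphism with connected fibers over a connected base has connected total space; hence $X\times_{S}S'$ is connected.

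Condition (iii) is the main step. By the same dictionary it is equivalent to the geometric assertion that every connected finite \'etale cover $p : Y \to X$ whose pullback $Y_{\overline{\eta}} \to X_{\overline{\eta}}$ is completely split is isomorphic to $X\times_{S}S'$ for some finite \'etale $S' \to S$. To prove this, I would set $g = f\circ p$ and form the Stein factorization $Y \xrightarrow{q} S' := \operatorname{Spec} g_{\ast}\mathcal{O}_{Y} \xrightarrow{h} S$. Since $p$ is finite \'etale and $f$ is proper and flat, $g$ is proper and flat, and each geometric fiber $Y_{\overline{s}}$ is \'etale over the reduced scheme $X_{\overline{s}}$, hence reduced; therefore $\dim_{k(\overline{s})} H^{0}(Y_{\overline{s}},\mathcal{O})$ equals the number of connected components of $Y_{\overline{s}}$. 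Cohomology and base change for the flat proper $g$, together with geometric reducedness of its fibers, shows $g_{\ast}\mathcal{O}_{Y}$ is finite locally free with formation commuting with base change, so $h$ is finite locally free and $S'_{\overline{s}} = \operatorname{Spec} H^{0}(Y_{\overline{s}},\mathcal{O})$ is reduced for every $s$; thus $h$ is finite \'etale. Because $Y$ is connected and $q$ is surjective, $S'$ is connected, so the fiber rank of $h$ is constant, equal to its value $n := \deg(Y/X)$ at $\overline{\eta}$ (where $Y_{\overline{\eta}}$ is completely split, hence has $n$ components). Finally the natural $S$-morphism $Y \to X\times_{S}S'$ is finite \'etale of degree $n/n = 1$ between connected schemes --- $X\times_{S}S'$ being connected by (i) applied to the connected $S'$ --- hence an isomorphism, giving $Y \cong X\times_{S}S'$.

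The hard part will be the cohomology-and-base-change input in (iii): one must verify that geometric reducedness of the fibers of $g$ makes $\dim H^{0}$ count connected components and makes $g_{\ast}\mathcal{O}_{Y}$ finite locally free with base change, which is exactly what forces the Stein factorization $h$ to be \'etale rather than merely finite locally free. Once $h$ is \'etale of constant degree $n$, the identification $Y \cong X\times_{S}S'$ follows formally, since a degree-$1$ finite \'etale morphism of connected schemes is an isomorphism.
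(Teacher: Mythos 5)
Your geometric strategy --- Stein factorization of $f\circ p$, cohomology and base change for proper flat morphisms with geometrically reduced fibers, and a degree count --- is precisely the SGA1 Exp.\,X argument that the paper cites for this proposition (the paper gives no independent proof; its Proposition \ref{essential} records only the covering-space criterion). However, your group-theoretic reduction in step (iii) uses the wrong criterion, and this is a genuine gap. You only require that every connected finite \'etale cover $Y\to X$ whose pullback $Y_{\overline{\eta}}\to X_{\overline{\eta}}$ is \emph{completely split} descends to $S$. Write $u:\pi_{1}(X_{\overline{\eta}})\to\pi_{1}(X)$, $v:\pi_{1}(X)\to\pi_{1}(S)$, $I=\mathrm{Im}(u)$, $K=\mathrm{Ker}(v)$. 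The connected covers with completely split pullback correspond to the open subgroups $M\subset\pi_{1}(X)$ whose normal core contains $I$; the intersection of all such $M$ is the intersection of all open \emph{normal} subgroups containing $I$, which is the closed normal closure of $I$, not $I$ itself. So your condition (iii), together with (i) and (ii), proves only that $K$ equals the closed normal closure of $I$ --- and since the normality of $I$ is equivalent to the very exactness being proved, this is strictly weaker. Concretely, the implication ``(i)+(ii)+(iii) $\Rightarrow$ exactness'' is false as a statement about profinite groups: take $G=S_{3}$, $Q=1$, and $H\to G$ an isomorphism onto $\langle(12)\rangle$; then $v$ is surjective, $v\circ u$ is trivial, and the only open subgroup of $G$ whose core contains $\langle(12)\rangle$ is $G$ itself (so condition (iii) holds trivially), yet $\mathrm{Ker}(v)=S_{3}\neq\langle(12)\rangle=\mathrm{Im}(u)$.

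The fix is to use the criterion with sections, as in condition 3 of Proposition \ref{essential}: every connected finite \'etale $Y\to X$ such that $Y_{\overline{\eta}}\to X_{\overline{\eta}}$ merely \emph{admits a section} must be isomorphic to $X\times_{S}S'$ (group-theoretically, the intersection of all open subgroups containing $I$ is exactly $I$, so this version is equivalent to $K\subset I$). Your Stein factorization argument still proves this stronger statement, with one adjustment to the final degree count: $S'\to S$ is finite \'etale of degree $\#\pi_{0}(Y_{\overline{\eta}})$, which you may no longer assume equals $n=\deg(Y/X)$; but the natural morphism $Y\to X\times_{S}S'$ is finite \'etale onto a connected scheme (connectedness of $X\times_{S}S'$ is your step (i)), hence of constant degree, and over the connected component of $(X\times_{S}S')_{\overline{\eta}}$ corresponding to the image of the section that degree equals $1$; therefore the degree is $1$ everywhere and $Y\cong X\times_{S}S'$. (A posteriori this also shows $Y_{\overline{\eta}}$ was completely split.) With this replacement your proof is complete and coincides with the one the paper points to.
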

Note that the scheme $S$ is not assumed to be normal in \cite{SGA1}.
This proposition has been improved by Hoshi \cite{Ho} and Mitsui \cite{Mit} (cf.\,Propositions \ref{Hoshi-exact} and \ref{Mitsui-exact}).
They discussed the case where the morphism $f$ has geometrically reduced fibers.

In the present paper, we discuss homotopy exact sequences without this assumption.
Our main result is as follows (see Theorem \ref{suff} for weak conditions):
\begin{thm}
Suppose that the following conditions are satisfied:
\begin{itemize}
\item The morphism $f$ is flat or the scheme $S$ is regular.
\item Let $s$ be a point of $S$ whose local ring is of dimension $1$.
Write $\xi_{1}, \ldots, \xi_{n}$ for the generic points of the scheme $f^{-1}(s)$, $e_{i}$ for the multiplicity of $\xi_{i}$, and $k(\xi_{i})$ (resp.\,$k(s)$) for the residual field of $\xi_{i}$ (resp.\,$s$).
Then $\mathrm{gcd}\,(e_{1}, \ldots, e_{n})=1$ and the algebraic closure of $k(s)$ in $k(\xi_{i})$ is separable for some $i$.
\end{itemize}
Then the sequence (\ref{introexact}) is exact.
\label{introthm}
\end{thm}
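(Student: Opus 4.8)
The plan is to translate the exactness of (\ref{introexact}) into the language of Galois categories and then to localise the whole question at the codimension-one points of $S$, where the hypotheses on the multiplicities $e_{i}$ and on the separability of the residual extensions are exactly tailored to apply. Recall that (\ref{introexact}) is exact if and only if two things hold: the map $\pi_{1}(X,\ast)\to\pi_{1}(S,\ast)$ is surjective, and every connected finite \'etale cover $Y\to X$ whose restriction to $X\times_{S}\overline{\eta}$ is a trivial cover is isomorphic to $X\times_{S}S'$ for a finite \'etale $S'\to S$. The surjectivity is the easy half and uses none of the second hypothesis: since $X$ is normal and connected it is integral, and $X\times_{S}\overline{\eta}$ being connected means precisely that $X_{\eta}=X\times_{S}\eta$ is geometrically connected over $k(\eta)$. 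For any connected finite \'etale $S'\to S$ the scheme $X\times_{S}S'$ is finite \'etale over the irreducible normal scheme $X$, hence a disjoint union of integral normal schemes each dominating $S'$; its generic fibre over $S'$ is $X_{\eta}\otimes_{k(\eta)}k(\eta')$, which is connected by geometric connectedness, so $X\times_{S}S'$ is irreducible, giving surjectivity.

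For the substantive half I would first produce the candidate cover of $S$. Using the homotopy exact sequence $\pi_{1}(X_{\overline{\eta}})\to\pi_{1}(X_{\eta})\to\pi_{1}(\eta)\to 1$ for the geometrically connected finite type $k(\eta)$-scheme $X_{\eta}$, a connected cover $Y\to X$ that becomes trivial over $X_{\overline{\eta}}$ has generic fibre $Y_{\eta}=X_{\eta}\times_{\eta}\Spec L$ for a finite separable extension $L/k(\eta)$, so the function field of $Y$ is $M=k(X)\otimes_{k(\eta)}L$. Let $S'$ be the normalisation of $S$ in $L$, a finite normal connected $S$-scheme with function field $L$. The problem is then reduced to showing that $S'\to S$ is finite \'etale: once this is known, $X\times_{S}S'$ is \'etale over the normal scheme $X$, hence normal, hence equal to the normalisation $Y$ of $X$ in $M$.

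The heart of the matter is the \'etaleness of $S'\to S$ at a point $s$ with $\dim\mathcal{O}_{S,s}=1$, and here I would argue entirely with discrete valuation rings. The ring $R=\mathcal{O}_{S,s}$ is a DVR with fraction field $K=k(\eta)$, uniformiser $\pi$ and residue field $\kappa=k(s)$; by Krull's principal ideal theorem each generic point $\xi_{i}$ of $f^{-1}(s)$ is a codimension-one point of $X$, so $A_{i}=\mathcal{O}_{X,\xi_{i}}$ is a DVR whose valuation $v_{i}$ satisfies $v_{i}(\pi)=e_{i}$ and whose residue field is $k(\xi_{i})$. Fix an extension $w$ of the valuation of $R$ to $L$, with ramification index $e_{L}=e(w/v)$ and residue field $\lambda$; I must show $e_{L}=1$ and $\lambda/\kappa$ separable. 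For each $i$ choose a valuation $u$ of $M$ restricting to $v_{i}$ on $k(X)$ and to $w$ on $L$, which exists because the fibre $\Spec(k(\xi_{i})\otimes_{\kappa}\lambda)$ of the relevant tensor product of the two valuation rings over $R$ is nonempty. Since $Y\to X$ is \'etale it is unramified at $\xi_{i}$, so $e(u/v_{i})=1$ and $e(u/v)=e_{i}$; but also $e(u/v)=e(u/w)\,e_{L}$, whence $e_{L}\mid e_{i}$. Letting $i$ range over all indices gives $e_{L}\mid\gcd(e_{1},\dots,e_{n})=1$, so $e_{L}=1$. For separability, take the index $i_{0}$ for which the algebraic closure $C$ of $\kappa$ in $k(\xi_{i_{0}})$ is separable over $\kappa$, together with a corresponding $u$; unramifiedness gives that the residue extension $\mu/k(\xi_{i_{0}})$ at $u$ is finite separable. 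Any element of $\lambda\subseteq\mu$ is then separable over $k(\xi_{i_{0}})$, hence separable over $\kappa$, so the algebraic closure $D$ of $\kappa$ in $\mu$ is separable over $\kappa$; since $\lambda\subseteq D$, the extension $\lambda/\kappa$ is separable. Thus $S'\to S$ is unramified, i.e.\ \'etale, at every codimension-one point of $S$.

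It remains to pass from codimension one to the whole of $S$, and this is where the dichotomy ``$f$ flat or $S$ regular'' enters and where I expect the main obstacle. When $S$ is regular, Zariski--Nagata purity of the branch locus applies to the finite dominant map $S'\to S$ of normal schemes and upgrades ``unramified in codimension one'' directly to ``\'etale''. When $f$ is merely flat, $S$ need not satisfy purity, and I would instead exploit that $f$ is faithfully flat: \'etaleness of $S'\to S$ may be tested after the base change along $X\to S$, so it suffices to show $X\times_{S}S'\to X$ is \'etale. Over the open $U\subseteq S$ on which $S'\to S$ is already \'etale---whose complement has codimension $\ge 2$, and whose preimage in $X$ therefore also has codimension $\ge 2$ by flatness---the scheme $X\times_{S}S'$ coincides with the normal \'etale cover $Y$, and the task is to extend this identification across the codimension $\ge 2$ locus. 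The delicate point, which I regard as the crux of the flat case, is that $X\times_{S}S'$ need not be normal when the fibres of $f$ are non-reduced, so one cannot simply invoke a Hartogs-type extension on a normal base; I would address this either through a Serre-criterion ($R_{1}+S_{2}$) analysis of $X\times_{S}S'$, using flatness of $X\times_{S}S'\to S'$ over the normal base $S'$, or by reducing to the strict henselisations of $R$ at codimension-one points, where the valuation-theoretic computation above already yields everything. Combining the two cases shows that $S'\to S$ is finite \'etale, and hence that (\ref{introexact}) is exact.
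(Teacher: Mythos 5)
Your proposal follows the paper's own route: reformulate exactness in terms of finite \'etale covers (cf.\ Proposition \ref{essential}), reduce everything to showing that the normalization $S'$ of $S$ in the separable closure $L$ of $k(\eta)$ in $K(Y)$ is \'etale, check unramifiedness at codimension-one points by valuation theory, and globalize by Zariski--Nagata purity (Proposition \ref{ZN}) when $S$ is regular and by a Serre-criterion argument with fppf descent when $f$ is flat. Your ramification-index computation ($e_{L}\mid e_{i}$ for every $i$, via a valuation $u$ on $M$ extending both $v_{i}$ and $w$) is correct, and is in fact more explicit than the paper's proof of Theorem \ref{suff}, which at the decisive point only says that the assertion ``follows from the hypothesis''. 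But two steps have genuine problems. The separability step is a non sequitur as written: you say an element $\alpha$ of $\lambda\subseteq\mu$ is ``separable over $k(\xi_{i_{0}})$, hence separable over $\kappa$'', but separability over a larger field never implies separability over a smaller one ($t^{1/p}$ is separable over $\F_{p}(t^{1/p})$ but not over $\F_{p}(t)$). The hypothesis on $C$, which you invoke but never actually use, is exactly what rescues the claim: the minimal polynomial $g$ of $\alpha$ over $k(\xi_{i_{0}})$ divides the minimal polynomial of $\alpha$ over $\kappa$, so the roots of $g$ are algebraic over $\kappa$, so its coefficients are algebraic over $\kappa$ and lie in $k(\xi_{i_{0}})$, hence lie in $C$; then $g$ is irreducible over $C$ and separable, so $\alpha$ is separable over $C$ and hence over $\kappa$, since $C/\kappa$ is separable. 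This small lemma is precisely the content of the property (F) of Remark \ref{(F)geom} on which the paper's proof rests.

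The flat case is the second problem: you offer only a disjunction of two sketches, and one of the two cannot work. Reducing to strict henselizations of $R$ at codimension-one points can never settle the flat case, because the entire difficulty there is the behaviour of $S'\rightarrow S$ over points of codimension $\geq 2$, which no localization at a codimension-one point detects. The Serre-criterion route is the correct one and is exactly the paper's Lemma \ref{nonZar}: by flatness of $X\times_{S}S'\rightarrow S'$, every codimension-$\leq1$ point of $X\times_{S}S'$ lies over a codimension-$\leq1$ point of $S'$, over which $S'\rightarrow S$ is already known to be \'etale, so $X\times_{S}S'$ is regular in codimension one; and every point lying over a codimension-$\geq2$ point of $S'$ has depth $\geq2$, again by flatness, giving $(S_{2})$. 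Hence $X\times_{S}S'$ is normal; being finite over $X$ with function field $K(Y)$ it equals $Y$, so it is \'etale over $X$, and fppf descent along the faithfully flat $f$ then makes $S'\rightarrow S$ \'etale. None of this appears in your text beyond the phrase ``Serre-criterion analysis'', so the flat half of the theorem is not actually proved. A final, smaller point: your exactness criterion quantifies over covers that become trivial over $X_{\overline{\eta}}$, whereas exactness requires handling all covers whose pullback merely admits a section; the two are equivalent here only because the image of $\pi_{1}(X_{\overline{\eta}})$ in $\pi_{1}(X)$ is normal (it is the image of $\mathrm{Ker}(\pi_{1}(X_{\eta})\rightarrow\mathrm{Gal}(\overline{k(\eta)}/k(\eta)))$ under the surjection $\pi_{1}(X_{\eta})\twoheadrightarrow\pi_{1}(X)$), and this deserves a sentence.
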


We cannot drop any assumption of Theorem \ref{suff} (cf.\,Section \ref{necsection}, Example \ref{curve(F)}, and Remark \ref{curve(F)rem}).
For instance, we have the following two propositions (see Section \ref{necsection} for general settings):

\begin{prop} (cf.\,Corollary \ref{fundcor} and Example \ref{neceexam}.2)
Suppose that the scheme $S$ is the spectrum of a semi-local Dedekind domain which contains $\Q$, and that the scheme $X$ is regular.
Then the sequence (\ref{introexact}) is exact if and only if the greatest common divisor of the multiplicities of the irreducible components of each closed fiber of $f$ is $1$.
\label{introneceexam}
\end{prop}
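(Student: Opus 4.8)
The plan is to deduce the ``if'' direction from the main theorem and to establish the ``only if'' direction by producing an explicit Kummer covering that splits over the geometric generic fibre but does not descend to $S$.

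First I would observe that the hypotheses put us inside the scope of Theorem \ref{introthm} (equivalently Theorem \ref{suff}). A semi-local Dedekind domain is regular, so $S$ is regular; since $A \supseteq \Q$ we are in characteristic $0$, so the condition that the algebraic closure of $k(s)$ in $k(\xi_i)$ be separable for some $i$ is automatic; and the points $s \in S$ whose local ring has dimension $1$ are exactly the closed points, i.e.\ the points lying under the closed fibres of $f$. Thus the hypothesis of Theorem \ref{introthm} reduces precisely to the requirement that $\gcd(e_1,\dots,e_n)=1$ at every closed fibre, which gives the ``if'' direction immediately.

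For the ``only if'' direction I argue by contraposition: suppose that at some closed point $s$ (maximal ideal $\mathfrak{m}_s$) we have $d \deq \gcd(e_1,\dots,e_n) > 1$. As a semi-local Dedekind domain is a principal ideal domain, I may pick $\pi\in A$ with $\mathfrak{m}_s=(\pi)$, so that $v_s(\pi)=1$ while $v_{s'}(\pi)=0$ at every other maximal ideal $s'$. Put $g\deq f^{\#}\pi\in K(X)^{\times}$ (here $X$ is integral, being connected and normal); then $\mathrm{div}_X(g)$ is supported on the closed fibre $f^{-1}(s)$ and equals $\sum_i e_i D_i$ with $D_i=\overline{\{\xi_i\}}$, so all its multiplicities are divisible by $d$. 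Let $Y$ be the normalization of $X$ in $K(X)(g^{1/d})$. I would then check that $Y\to X$ is a connected finite \'etale cover of degree $d$: connectedness holds because $X\times_S\overline{\eta}$ is connected, whence $K(S)$ is algebraically closed in $K(X)$, so if $\pi$ were a $d'$-th power in $K(X)$ for some $d'\mid d$, $d'>1$, it would be one in $K(S)$, contradicting $v_s(\pi)=1$; and \'etaleness follows from purity, since away from $\mathrm{div}(g)$ the polynomial $x^d-g$ is separable (characteristic $0$) while along each $D_i$ the local computation in the DVR $\mathcal{O}_{X,\xi_i}$ shows unramifiedness precisely because $d\mid e_i$, so $Y\to X$ is unramified in codimension $1$ and Zariski--Nagata purity (using $X$ regular, $Y$ normal) upgrades this to \'etaleness.

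Finally I would show this cover obstructs exactness. Over $\overline{\eta}$ the function $g=\pi$ becomes a $d$-th power and $\mu_d\subset\overline{K(S)}$, so $Y\times_X(X\times_S\overline{\eta})$ splits into $d$ copies; hence the image of $\pi_1(X\times_S\overline{\eta},\ast)$ lands in the open subgroup $H\le\pi_1(X,\ast)$ attached to $Y$. Were (\ref{introexact}) exact, this image would equal $\ker(\pi_1(X,\ast)\to\pi_1(S,\ast))$, forcing that kernel into $H$ and hence realizing $Y$ as a pullback $X\times_S T$ of some finite \'etale $T\to S$. Since $K(S)$ is algebraically closed in $K(X)$, the maximal subextension of $K(Y)/K(S)$ algebraic over $K(S)$ is exactly $K(T)$, and it must contain $g^{1/d}$; thus $K(S)(\pi^{1/d})\subseteq K(T)$. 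But $K(S)(\pi^{1/d})/K(S)$ is totally ramified at $s$ (as $\pi$ is a uniformizer and $d>1$), so $v_s$ ramifies in $K(T)$, contradicting that $T\to S$ is \'etale. This contradiction proves non-exactness, and thus the contrapositive. I expect the genuine obstacle to be this last non-descent step: building the cover is routine Kummer theory, but ruling out descent to $S$ forces one to use both that $S$ is one-dimensional (to read off ramification of $T\to S$ from the single valuation $v_s$) and that $X\times_S\overline{\eta}$ is connected (to identify $K(T)$ with the algebraic closure of $K(S)$ in $K(Y)$ and so to locate $g^{1/d}$ in $K(T)$); this is exactly the content packaged in Corollary \ref{fundcor} and Example \ref{neceexam}.2.
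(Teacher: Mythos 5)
Your proposal is correct, and its overall skeleton coincides with the paper's: sufficiency is quoted from Theorem \ref{suff} exactly as the paper does (via Corollary \ref{fundcor} and Example \ref{neceexam}.2), and necessity is proved by producing a covering of $S$, ramified only at $s$ with ramification index dividing all the $e_i$, whose pullback to $X$ normalizes to a connected \'etale covering that splits over $X_{\overline{\eta}}$ but does not descend to $S$. The difference is in implementation. The paper factors the necessity direction through general machinery: property (T) (Lemma \ref{semi}, where the base covering is cut out by $T^{l}-aT-b$, a polynomial engineered so that \'etaleness away from $s$ survives residue characteristic $l$), then Theorem \ref{essnece}, whose proof reduces by purity and strict henselization to the tame local computation of Lemma \ref{essnec}, and finally Proposition \ref{essential}. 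You instead take the base extension to be the explicit Kummer extension $K(S)(\pi^{1/d})$ --- your $Y$ is exactly the normalization of $X\times_{S}S'$ for $S'$ the normalization of $S$ in $K(S)(\pi^{1/d})$ --- which is legitimate here precisely because $\Q\subseteq A$ (so $T^{d}-\pi$ has unit discriminant away from $s$; in positive residue characteristic this fails, which is why the paper needs its cleverer polynomial) and because $A$ is a PID (so $\pi$ can be chosen supported only at $s$). You also re-prove by hand, via the subgroup $H$ and the identification of $K(T)$ with the algebraic closure of $K(S)$ in $K(Y)$, the direction of Proposition \ref{essential} that the paper simply cites. What each buys: the paper's route is uniform in residue characteristic, so the same lemmas also yield Example \ref{neceexam}.1 and Proposition \ref{geomexact}; yours is shorter and self-contained for this characteristic-$0$ semi-local case, and merges Lemma \ref{semi}, Lemma \ref{essnec}, and Theorem \ref{essnece} into one explicit computation.

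One point to tighten: your irreducibility argument for $x^{d}-g$ checks only that $g$ is not a $d'$-th power, which is the full Kummer irreducibility criterion only when $4\nmid d$ (one must also exclude $g\in -4\,K(X)^{4}$). This is repaired by your own device (if $g=-4c^{4}$ then $c$ is algebraic over $K(S)$, hence lies in $K(S)$, forcing $4\mid v_{s}(\pi)$, a contradiction), or more cleanly by noting that $x^{d}-\pi$ is Eisenstein at $v_{s}$ over $K(S)$ and that $K(S)(\pi^{1/d})$ is linearly disjoint from $K(X)$ over $K(S)$ since $K(S)$ is algebraically closed in $K(X)$; alternatively, run the construction with a prime divisor $l$ of $d$, as the paper does.
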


\begin{prop} (cf.\,Proposition \ref{geomexact})
Suppose that the scheme $S$ is a smooth curve over a field $k$ of characteristic $0$, and the scheme $X$ is regular.
Moreover, suppose that the scheme $S$ is not proper rational (cf.\,Definition \ref{curvedfn}).
Then the sequence (\ref{introexact}) is exact if and only if the greatest common divisor of the multiplicities of the irreducible components of each closed fiber of $f$ is $1$.
\label{introgeomexact}
\end{prop}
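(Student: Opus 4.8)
The plan is to prove the two implications separately, obtaining sufficiency as a direct application of Theorem \ref{introthm} and proving necessity by hand through the Kummer covers attached to the multiple fibres.

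For the ``if'' direction I would merely check that the hypotheses of Theorem \ref{introthm} are met. Since $S$ is a smooth curve over $k$ it is regular, so the first bullet holds; since $S$ is one-dimensional, the points $s$ with $\dim\mathcal{O}_{S,s}=1$ are exactly the closed points, so the second bullet is precisely the condition on the closed fibres. As $\mathrm{char}\,k=0$, every residue-field extension is separable, so the separability clause is automatic and the only surviving requirement is $\gcd(e_1,\dots,e_n)=1$ at each closed fibre. Hence, granting the standing hypothesis that $X\times_S\overline{\eta}$ is connected, Theorem \ref{introthm} gives exactness. (Surjectivity of $\pi_1(X)\to\pi_1(S)$ also follows from the geometric connectedness of the generic fibre together with the normality of $X$: $k(S)$ is algebraically closed in $k(X)$, so every connected finite \'etale cover of $S$ remains connected after base change to $X$.)

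For the ``only if'' direction I argue by contraposition. Assume that some closed fibre over a point $s$ has $d\deq\gcd(e_1,\dots,e_n)>1$; I would produce a connected finite \'etale cover of $X$ that becomes trivial on $X\times_S\overline{\eta}$ but does not descend to $S$, contradicting exactness. The starting point is that $f^{-1}(s)$, viewed as a divisor, is $f^{*}(s)=\sum_i e_i\,\overline{\{\xi_i\}}=dD'$ with $D'\deq\sum_i (e_i/d)\,\overline{\{\xi_i\}}$ effective; as $X$ is regular one has $\mathrm{Pic}(X)=\mathrm{Cl}(X)$ and $d[D']=f^{*}[s]$. Picking $g\in k(S)^{\times}$ with $\mathrm{ord}_s(g)=1$, the pullback $f^{\#}g$ has divisor $dD'$ near the fibre, so --- because $\mathrm{char}\,k=0$ and every multiplicity over $s$ is divisible by $d$ --- the Kummer cover $z^{d}=f^{\#}g$ is finite \'etale in a neighbourhood of $f^{-1}(s)$, is connected once $X\times_S\overline{\eta}$ is (the $d$ sheets being cyclically permuted), and splits completely over $\overline{\eta}$ since $g$ acquires a $d$-th root in $\overline{k(S)}$. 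This cover is exactly the local non-exactness recorded by Proposition \ref{introneceexam} and Corollary \ref{fundcor}: it gives a class $\beta_s\in H^{1}(X,\mu_d)$ supported over $s$, killed on $X\times_S\overline{\eta}$, and not pulled back from $\mathcal{O}_{S,s}$, over which $g$ ramifies at $s$.

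The crux --- and the step I expect to be the main obstacle --- is the \emph{globalization}: upgrading this local datum to an honest finite \'etale cover of all of $X$ that still fails to descend, i.e.\ lifting $\beta_s$ to a class $\alpha\in H^{1}(X,\mu_d)$ with $\alpha|_{X\times_S\overline{\eta}}=0$ but $\alpha\notin\mathrm{Im}\bigl(H^{1}(S,\mu_d)\to H^{1}(X,\mu_d)\bigr)$; such an $\alpha$ yields a character $\phi\colon\pi_1(X)\to\mu_d$ that kills the image of $\pi_1(X\times_S\overline{\eta})$ but not $\ker(\pi_1(X)\to\pi_1(S))$, which (using the surjectivity above) is precisely the failure of exactness. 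I would organize this through the edge sequence of the Leray spectral sequence for $f$ and $\mu_d$, namely $0\to H^{1}(S,f_{*}\mu_d)\to H^{1}(X,\mu_d)\to H^{0}(S,R^{1}f_{*}\mu_d)\xrightarrow{\,d_2\,}H^{2}(S,f_{*}\mu_d)$ (the generic fibre being geometrically connected, $f_{*}\mu_d$ agrees with $\mu_d$ away from finitely many points, which is what is needed): here $\beta_s$ defines a nonzero, generically trivial section of $R^{1}f_{*}\mu_d$, and the sole obstruction to lifting it is $d_2(\beta_s)\in H^{2}(S,\mu_d)$. This is exactly where ``$S$ not proper rational'' is used. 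If $S$ is affine then $H^{2}(S,\mu_d)=0$, since an affine curve has \'etale cohomological dimension $1$, so $d_2=0$ and the lift exists unconditionally; if $S$ is proper of positive genus one shows the support-at-$s$ section is annihilated by $d_2$ (equivalently, the orbifold base is good, so the local $\Z/d$ injects), and again the lift exists. By contrast, for the excluded case $S\cong\mathbb{P}^{1}$ a single multiple fibre behaves like a teardrop orbifold: $d_2(\beta_s)\neq0$, the obstruction survives, and exactness may hold in spite of $d>1$ --- which is precisely why the hypothesis cannot be dropped. Pulling these strands together, the lifted $\alpha$ furnishes a finite \'etale cover of $X$ trivial on the geometric generic fibre yet not pulled back from $S$, so (\ref{introexact}) is not exact, completing the contrapositive.
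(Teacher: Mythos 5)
Your ``if'' direction is correct and is exactly the paper's argument: $S$ smooth over a field of characteristic $0$ is regular, the dimension-one points of $S$ are its closed points, and separability is automatic in characteristic $0$, so Theorem \ref{suff} applies. The problems are in the ``only if'' direction. One of them is repairable: your affine case rests on ``$H^{2}(S,\mu_d)=0$ since an affine curve has \'etale cohomological dimension $1$'', which is true only over a separably (algebraically) closed base field. Here $k$ is an arbitrary field of characteristic $0$; for $k=\Q$ or $k=\R$ one has $H^{2}(S,\mu_d)\neq 0$ (already $H^{2}(G_{k},\mu_d)$ contributes via Hochschild--Serre). You would first have to reduce to $k=\overline{k}$ --- which is legitimate in characteristic $0$, since both $\mathrm{Ker}(\pi_{1}(X)\to\pi_{1}(S))$ and $\mathrm{Im}(\pi_{1}(X_{\overline{\eta}})\to\pi_{1}(X))$ lie in $\pi_{1}(X_{\overline{k}})$ and are unchanged by this base change --- but that step is absent, and without it your obstruction group is simply the wrong one.

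The fatal problem is the proper case, where your key assertion (that for proper $S$ of positive genus ``one shows'' $d_{2}(\beta_s)=0$) is not merely unproven but false. Suppose $S$ is proper over $\overline{k}$ and lift your $\beta_s$, which by construction is supported at the single point $s$. Unwinding what a lift $\alpha$ would mean (via Proposition \ref{essential}, Abhyankar's lemma as in Lemma \ref{essnec}, and purity, exactly as in Theorem \ref{essnece}), such an $\alpha$ produces a nontrivial \emph{cyclic} extension $L$ of $K(S)$ whose normalization $S_{L}\to S$ is ramified over $s$ and \'etale everywhere else. No such abelian extension exists over a proper smooth curve of \emph{any} genus: in characteristic $0$ covers tamely ramified only at $s$ are controlled by $\pi_{1}(S\setminus\{s\})$, and the loop around $s$ is a product of commutators there, i.e.\ $H_{1}(S\setminus\{s\})\cong H_{1}(S)$, so every cyclic cover of $S\setminus\{s\}$ extends to a finite \'etale cover of $S$. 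Equivalently, your obstruction is (up to a unit) the cycle class of the point $s$ in $H^{2}(S,\mu_d)\cong\Z/d$, which generates that group for every proper curve; the genus is invisible to this obstruction, so positive genus helps you no more than $\mathbb{P}^{1}$ does. Your parenthetical appeal to goodness of the orbifold conflates injectivity of the local $\Z/d$ into the orbifold group (true for $g\geq 1$) with detectability by characters (false: the cone class dies in every abelian quotient). What positive genus actually buys --- and this is precisely the paper's Lemma \ref{nonproper} combined with Remark \ref{weak} and Theorem \ref{essnece} --- is a \emph{nontrivial finite \'etale cover} $S'\to S$ on which $s$ has at least two preimages $s_{1},s_{2}$; a cyclic cover of $S'$ ramified exactly at $s_{1},s_{2}$ does exist, and its pullback witnesses non-exactness. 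The witnessing cover of $X$ is therefore metabelian, not abelian: no class in $H^{1}(X,\mu_d)$ can do the job, so the Leray/$d_{2}$ strategy collapses exactly where the hypothesis ``not proper rational'' has to carry its weight.
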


We apply the above results to the case where $f: X \rightarrow S$ is a morphism from a regular variety to a hyperbolic curve (cf.\,Definition \ref{curvedfn}).
In particular, we prove that a certain morphism is characterized by the property that the kernel of the induced homomorphism between the \'etale fundamental groups is topologically finitely generated
(see Theorem \ref{curve criterion} for more details):
\begin{thm} (cf.\,Theorem \ref{curve criterion})
Suppose that $S$ is a hyperbolic curve over a field of characteristic $0$ and the scheme $X$ is regular.
The following three conditions are equivalent:
\begin{enumerate}
\item The greatest common divisor of the multiplicities of the irreducible components of each closed fiber of $f$ is $1$.
\item The sequence (\ref{introexact}) is exact.
\item The profinite group $\mathrm{Ker}(\pi_{1}(X, \ast)\rightarrow \pi_{1}(S, \ast))$ is topologically finitely generated.
\end{enumerate}
\label{intro curve criterion}
\end{thm}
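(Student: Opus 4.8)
The plan is to establish the cycle of implications (1) $\Rightarrow$ (2) $\Rightarrow$ (3) $\Rightarrow$ (1); all of the new difficulty is concentrated in the last step. I would begin by reducing the equivalence (1) $\Leftrightarrow$ (2) to an already available result. A proper rational curve is $\mathbb{P}^{1}$ over the base field, which admits no nontrivial connected finite étale cover and is in particular not hyperbolic; hence a hyperbolic curve $S$ is never proper rational (cf.\ Definition \ref{curvedfn}). Since $S$ is moreover a smooth, hence regular, curve over a field of characteristic $0$, Proposition \ref{geomexact} applies directly and gives (1) $\Leftrightarrow$ (2). (The implication (1) $\Rightarrow$ (2) alone also follows from Theorem \ref{suff}, the regularity hypothesis being met and the separability hypothesis being automatic in characteristic $0$.) It therefore remains only to weave condition (3) into this equivalence.

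For (2) $\Rightarrow$ (3): if the sequence (\ref{introexact}) is exact, then $N \deq \mathrm{Ker}(\pi_{1}(X,\ast)\to\pi_{1}(S,\ast))$ equals the image of $\pi_{1}(X\times_{S}\overline{\eta},\ast)$. The scheme $X\times_{S}\overline{\eta}$ is of finite type over the algebraically closed field $\overline{\eta}$ of characteristic $0$, so its étale fundamental group is topologically finitely generated (by the Lefschetz principle together with the finiteness of the topological fundamental group of a complex variety). As a continuous image of such a group, $N$ is then topologically finitely generated, which is (3).

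The substantive implication is (3) $\Rightarrow$ (1), and I would prove its contrapositive: if (1) fails I will show that $N$ is not topologically finitely generated, so that (3) fails. Fix a closed point $s_{0}\in S$ at which the multiplicities $e_{1},\dots,e_{n}$ of the fibre satisfy $\gcd(e_{i}) = m > 1$, and fix a prime $p \mid m$. The local construction underlying the necessity half of Proposition \ref{geomexact} (cf.\ Corollary \ref{fundcor}) produces, from an $m$-th root of the pullback of a uniformizer at $s_{0}$, a connected finite étale cover of $X$ whose class is a homomorphism $\chi\colon\pi_{1}(X,\ast)\to\Z/m$ that is trivial on the image of $\pi_{1}(X\times_{S}\overline{\eta},\ast)$ (an $m$-th root exists over the algebraically closed field $\overline{\eta}$) yet nontrivial on $N$ (the associated cover of $S$ is ramified at $s_{0}$ and so cannot descend). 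Thus $\chi|_{N}$ is a nonzero element of $\mathrm{Hom}(N,\Z/m)$. To see that there are infinitely many independent such elements I would exploit hyperbolicity through a tower of covers: for a connected finite étale cover $S'\to S$ the base change $X' \deq X\times_{S}S'$ is again connected (as $\pi_{1}(X,\ast)\to\pi_{1}(S,\ast)$ is surjective, which follows from the connectedness of $X\times_{S}\overline{\eta}$) and regular, the morphism $X'\to S'$ has the very same kernel $N = \mathrm{Ker}(\pi_{1}(X',\ast)\to\pi_{1}(S',\ast))$, and over each point of $S'$ above $s_{0}$ the fibre still has multiplicity gcd equal to $m$ (étale base change preserves multiplicities). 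Each such point $s'$ yields, by the construction above, a character $\chi_{s'}\colon\pi_{1}(X',\ast)\to\Z/m$, ramified along the fibre over $s'$ and trivial on the image of $\pi_{1}(X\times_{S}\overline{\eta},\ast)$; restricting to $N$ and reducing modulo $p$ gives elements of $\mathrm{Hom}(N,\F_{p})$. Because $S$ is hyperbolic, $S'\to S$ can be chosen with arbitrarily many points above $s_{0}$, so the number of these elements is unbounded.

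The main obstacle is the independence of the resulting characters: a priori the classes $\chi_{s'}|_{N}$ attached to distinct points $s'$, or arising at different levels of the tower, could satisfy linear relations after restriction to $N$. I would resolve this by a local, inertial separation argument, distinguishing the characters by their ramification along the pairwise disjoint divisors $X'\times_{S'}s'$, and by organizing the count representation-theoretically: the classes $\chi_{s'}|_{N}$ form the image of a module induced from the decomposition group of $s_{0}$ up the tower, and one must show this induced module does not collapse under restriction to $N$, so that $\dim_{\F_{p}}\mathrm{Hom}(N,\F_{p}) = \infty$ and $N$ is not topologically finitely generated. This is precisely where hyperbolicity is indispensable: for a base with only finitely many covers (e.g.\ a proper rational curve) the tower cannot produce unboundedly many independent classes, and indeed the equivalence (2) $\Leftrightarrow$ (3) genuinely fails there.
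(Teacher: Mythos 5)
Your handling of (1) $\Leftrightarrow$ (2) and of (2) $\Rightarrow$ (3) is correct and coincides with the paper's route: the first is exactly Proposition \ref{geomexact} (a hyperbolic curve is never proper rational), and the second uses that, under exactness, $N=\mathrm{Ker}(\pi_{1}(X,\ast)\rightarrow\pi_{1}(S,\ast))$ is the continuous image of the topologically finitely generated group $\pi_{1}(X\times_{S}\overline{\eta},\ast)$. The divergence is at (3) $\Rightarrow$ (2)/(1). The paper does not prove this implication itself: it invokes Hoshi's theorem (\cite{Ho2} Theorem 2.8) on the property $\mathbb{P}_{\not\exists \twoheadrightarrow \infty}$, together with the fact that topologically finitely generated profinite groups have that property. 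You instead attempt a self-contained contrapositive argument, producing infinitely many independent characters $N\rightarrow\F_{p}$ from covers in a tower. That is a genuinely different route, but as written it has a real gap precisely at the step you yourself flag as ``the main obstacle'': the linear independence is asserted, not proved, and it is the entire content of what the paper outsources to \cite{Ho2}.

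Moreover, the mechanism you propose for independence --- ``distinguishing the characters by their ramification along the pairwise disjoint divisors $X'\times_{S'}s'$'' --- cannot work as stated. By construction (Theorem \ref{essnece}, resting on Lemma \ref{essnec}) the covers of $X'$ in question are finite \'etale over \emph{all} of $X'$; that is the only reason they define elements of $\mathrm{Hom}(\pi_{1}(X'),\Z/m)$ in the first place. Hence every one of your characters kills every inertia subgroup attached to every divisor of $X'$, and there is no ramification upstairs to separate them. The separation must happen on the base and be transferred upstairs, and this transfer is the missing idea. It can be supplied: since $\pi_{1}(X')/N\cong\pi_{1}(S')$, a combination $\sum_{i}a_{i}\chi_{s'_{i}}$ vanishes on $N$ if and only if it is pulled back from a character of $\pi_{1}(S')$; and since $K(S')$ is algebraically closed in $K(X')$ (the generic fibre is geometrically connected), the map $\mathrm{Gal}(\overline{K(X')}/K(X'))\rightarrow \mathrm{Gal}(\overline{K(S')}/K(S'))$ is surjective, so restriction of characters along it is injective. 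Therefore $\sum_{i}a_{i}\chi_{s'_{i}}$ vanishes on $N$ if and only if the corresponding combination $\sum_{i}a_{i}\kappa_{i}$ of Kummer characters of $K(S')$ is unramified at every closed point of $S'$, which the disjointness of the ramification loci of the $\kappa_{i}$ at the points $s'_{i}$ \emph{of $S'$} (not along the fibres in $X'$) forbids; in the proper case one must also account for the constraint that each $\kappa_{i}$ ramifies at a pair of points over $s_{0}$, which costs one dimension. With this reduction your cross-level ``induced module'' apparatus is unnecessary: a single layer $S'$ with $r$ points over $s_{0}$ already gives $\dim_{\F_{p}}\mathrm{Hom}(N,\F_{p})\geq r-1$, and $r$ is unbounded by hyperbolicity. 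Without this reduction, the independence claim --- and hence your proof of (3) $\Rightarrow$ (1) --- is unsupported.
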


Note that condition 1 is stated only in the language of schemes and that condition 3 is stated only in the language of topological groups.
Such statements are natural in the framework of anabelian geometry (cf.\,\cite{tama1}, \cite{Moch1}, \cite{Ho}).
In anabelian geometry, we attempt to get information of varieties from their \'etale fundamental groups.
In this sense, Theorem \ref{intro curve criterion} may be regarded as a group theoretical characterization of a morphism as written in condition 1.

Let us explain the strategies of the proofs of the exactness of the homotopy exact sequences given in \cite{SGA1}, \cite{Ho}, \cite{Mit}, and the present paper.
Let $X'\rightarrow X$ be an \'etale covering space whose pull-back $X'\times_{S}\overline{\eta}\rightarrow X\times_{S}\overline{\eta}$ has a section.
To show that the sequence (\ref{introexact}) is exact, we need to construct an \'etale covering space $S' \rightarrow S$ such that the pull-back $X\times_{S}S'$ is isomorphic to $X'$ over $X$.
In \cite{SGA1}, the Stein factorization of the morphism $X'\rightarrow S$ plays the role of $S'$.
In \cite{Ho}, where $f$ is not always proper, the normalization of $S$ in the function field of $X'$ plays the role of $S'$ there.
(In the present paper, we need to use the normalization of $S$ in the separable closure of the function field of $S$ in the function field of $X'$).
In \cite{Ho} and \cite{Mit}, they replace $X$ by another scheme over $S$ which is faithfully flat with geometrically normal fibers to show that the morphism $S'\rightarrow S$ is \'etale.
In our situation, we cannot find such a good scheme.
If the scheme $S$ is regular, it suffices to show that the morphism $S'\rightarrow S$ is \'etale over an open subscheme of $S$ whose complement is of codimension $\geq2$ by the Zariski-Nagata purity theorem.
If the scheme  $S$ is not regular, we need to assume that the morphism $f$ is flat (cf.\,Example \ref{norreg}.1).
In this case, we use Serre's criterion for normality to compare the morphism $X' \rightarrow X$ and the morphism $S' \rightarrow S$.

The content of each section is as follows: 
In Section \ref{suffsec}, we give the proof of Theorem \ref{introthm}.
In Section \ref{Lemdede}, we discuss properties of Dedekind schemes to have many tame extensions.
In Section \ref{necsection}, we give the proofs of Proposition \ref{introneceexam} and Proposition \ref{introgeomexact}.
In Section \ref{curves}, we give the proof of Theorem \ref{curve criterion}.
In Section \ref{app}, we discuss property (F).
In Section \ref{app2}, we discuss the homotopy exact sequence for geometrically connected  (not necessarily generic) fibers.

{\it Acknowledgements:} The author would like to thank Yuichiro Hoshi for some helpful discussions.
 Also, the author would like to thank Takeshi Tsuji for useful advice.
This work was supported by the Research Institute for Mathematical Sciences, a Joint Usage/Research Center located in Kyoto University.

\section{Sufficient conditions}
In this section, we give the proof of Theorem \ref{introthm} in a generalized setting.

Let $f: X \rightarrow S$ be a surjective morphism of locally finite type between connected  locally Noetherian normal separated schemes.
We write $K(X)$ (resp.\,$K(S)$) for the function field of $X$ (resp.\,$S$).
Take a geometric generic point $\overline{\eta}$ of $S$ and write $X_{\overline{\eta}}$ for the scheme $X\times_{S}\overline{\eta}$.
Suppose that $X_{\overline{\eta}}$ is connected (and hence irreducible).
Take a geometric point $\overline{x}$ of $X_{\overline{\eta}}$.
Then we obtain the following sequence of the \'etale fundamental groups:
\begin{equation}
\pi_{1}(X_{\overline{\eta}}, \overline{x})\rightarrow \pi_{1}(X, \overline{x}) \rightarrow \pi_{1}(S, \overline{x}) \rightarrow 1.
\label{exac}
\end{equation}

\begin{rem}
\begin{enumerate}
\item
Since $f$ is generically geometrically connected, the homomorphism $\pi_{1}(X, \overline{x}) \rightarrow \pi_{1}(S, \overline{x})$ is surjective by \cite{Ho} Lemma 1.6.
\item
Let $S' \rightarrow S$ be a finite \'etale morphism which the morphism $\overline{\eta} \rightarrow S$ factors through.
By Remark \ref{twoexact}.1, the sequence (\ref{exac}) is exact if and only if the sequence
$$\pi_{1}(X_{\overline{\eta}}, \overline{x})\rightarrow \pi_{1}(X\times_{S}S', \overline{x}) \rightarrow \pi_{1}(S', \overline{x}) \rightarrow 1$$
is exact.
\item
The composite homomorphism $\pi_{1}(X_{\overline{\eta}}, \overline{x})\rightarrow \pi_{1}(X, \overline{x})\rightarrow \pi_{1}(S, \overline{x})$ is trivial.
\item
Thus, the sequence (\ref{exac}) is exact if and only if
$$\mathrm{Im}(\pi_{1}(X_{\overline{\eta}}, \overline{x})\rightarrow \pi_{1}(X, \overline{x}))\supset \mathrm{Ker}(\pi_{1}(X, \overline{x}) \rightarrow \pi_{1}(S, \overline{x})  ).$$
\end{enumerate}
\label{twoexact}
\end{rem}

First, we recall sufficient conditions given by Hoshi and Mitsui which generalize Proposition \ref{SGAhom}.

\begin{prop}(\cite{Ho} Proposition 1.10)
Suppose that there exist a connected locally Noetherian normal separated scheme $Y$ and a morphism $p: Y \rightarrow X$.
Moreover, suppose that the following conditions are satisfied:
\begin{itemize}
\item The morphism $p$ is dominant and induces an outer surjection\\ $\pi_{1}(Y) \rightarrow \pi_{1}(X)$.
\item The morphism $f$ is generically geometrically integral. 
\item The composite morphism $f\circ p$ is of finite type, faithfully flat, geometrically normal, and generically geometrically connected.
\end{itemize}
Then the sequence (\ref{exac}) is exact.
\label{Hoshi-exact}
\end{prop}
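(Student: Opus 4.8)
The plan is to verify the containment of Remark \ref{twoexact}.4, namely $\mathrm{Im}(\pi_{1}(X_{\overline{\eta}}, \overline{x}) \to \pi_{1}(X, \overline{x})) \supseteq \mathrm{Ker}(\pi_{1}(X, \overline{x}) \to \pi_{1}(S, \overline{x}))$, through the Galois-theoretic dictionary between finite \'etale covers and finite continuous fundamental-group sets. As recalled in the Introduction, this containment is equivalent to the following descent statement: for every connected finite \'etale cover $X' \to X$ such that the base change $X'_{\overline{\eta}} := X' \times_{X} X_{\overline{\eta}} \to X_{\overline{\eta}}$ admits a section, there exist a finite \'etale cover $S' \to S$ and an isomorphism $X \times_{S} S' \cong X'$ over $X$. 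So I would fix such an $X'$ and construct the desired $S'$.

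The key device is to transport this descent problem to $Y$, where the structure morphism to $S$ is well behaved. Since $p$ induces an outer surjection $\pi_{1}(Y) \to \pi_{1}(X)$, the pullback functor $p^{\ast} : \mathrm{FEt}(X) \to \mathrm{FEt}(Y)$ is fully faithful and sends connected covers to connected covers (a surjection of fundamental groups corresponds to a fully faithful restriction functor that preserves transitivity of the action). Hence $Y' := Y \times_{X} X'$ is a connected finite \'etale cover of $Y$. Moreover, a section of $X'_{\overline{\eta}} \to X_{\overline{\eta}}$ base changes to a section of $Y'_{\overline{\eta}} = X'_{\overline{\eta}} \times_{X_{\overline{\eta}}} Y_{\overline{\eta}} \to Y_{\overline{\eta}}$, so $Y'_{\overline{\eta}} \to Y_{\overline{\eta}}$ again admits a section.

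Next I would apply the homotopy exact sequence for the composite $g := f \circ p : Y \to S$. Because $g$ is of finite type, faithfully flat, geometrically normal, and generically geometrically connected, its geometric generic fiber $Y_{\overline{\eta}}$ is connected and the sequence $\pi_{1}(Y_{\overline{\eta}}) \to \pi_{1}(Y) \to \pi_{1}(S) \to 1$ is exact. Applying the same covering reformulation now to $g$, the connected cover $Y' \to Y$ with a section over $Y_{\overline{\eta}}$ descends: there is a finite \'etale $S' \to S$ with an isomorphism $Y \times_{S} S' \cong Y'$ over $Y$. This $S'$ is my candidate, and it remains to descend the identification from $Y$ to $X$. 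The cover $X \times_{S} S' \to X$ satisfies $p^{\ast}(X \times_{S} S') = Y \times_{S} S' \cong Y' = p^{\ast} X'$ over $Y$, so $p^{\ast} X'$ and $p^{\ast}(X \times_{S} S')$ are isomorphic. Full faithfulness of $p^{\ast}$ promotes this to an isomorphism $X' \cong X \times_{S} S'$ over $X$ (a fully faithful functor reflects isomorphisms), which is exactly the required descent. The hypothesis that $f$ is generically geometrically integral enters to ensure that $X_{\overline{\eta}}$ is integral, so that $X'_{\overline{\eta}}$ and the comparison of geometric generic fibers behave as expected.

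The main obstacle is the homotopy exact sequence for the auxiliary fibration $g : Y \to S$ itself, since $g$ need not be proper and Proposition \ref{SGAhom} does not apply directly. Here the strength of the geometric normality of the fibers must be exploited: normal geometric fibers are in particular geometrically reduced and geometrically unibranch, and this is precisely what controls the normalization $S'$ of $S$ inside the function field of $Y'$, ensuring that $S' \to S$ is not merely finite but \'etale. Concretely, one reduces to checking \'etaleness after base change to the normal total space $Y$ and along the nice fibers of $g$, invoking purity or a Serre-type normality criterion. Establishing this geometrically normal, faithfully flat fibration case --- or isolating it as a cited building block --- is where the substantive work lies; the passage from $g$ back to $f$ is the formal full-faithfulness descent sketched above.
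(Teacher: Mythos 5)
The paper itself gives no proof of Proposition \ref{Hoshi-exact}: it is recalled from \cite{Ho}, Proposition 1.10, so there is no internal argument to compare against line by line. Your strategy is, however, precisely the route the Introduction attributes to Hoshi and Mitsui (``replace $X$ by another scheme over $S$ which is faithfully flat with geometrically normal fibers''), and its formal part is correct: an outer surjection $\pi_{1}(Y)\twoheadrightarrow\pi_{1}(X)$ does make $p^{\ast}\colon \mathrm{FEt}(X)\rightarrow \mathrm{FEt}(Y)$ fully faithful and connectedness-preserving; a section over $X_{\overline{\eta}}$ does pull back to a section over $Y_{\overline{\eta}}$; and full faithfulness does reflect the isomorphism $p^{\ast}X'\cong p^{\ast}(X\times_{S}S')$ to an $X$-isomorphism $X'\cong X\times_{S}S'$. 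Combined with the equivalence between exactness and covering descent (Remark \ref{twoexact}.4, Proposition \ref{essential}, applied both to $f$ and to $g=f\circ p$, whose hypotheses $g$ satisfies), this correctly reduces the proposition to the exactness of the homotopy sequence for $g$.

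The genuine weakness is that this reduction target --- exactness for the faithfully flat, geometrically normal, generically geometrically connected fibration $g$ --- is exactly the analytic heart of the statement, and you leave it as a deferred ``building block'' with only a gesture toward purity and Serre's criterion. As a standalone proof the text is therefore incomplete. The gap is fillable inside this paper, and non-circularly: $g$ is flat, and since its fibers are geometrically normal, for every point $s\in S$ with $\dim O_{S,s}=1$ the fiber $g^{-1}(s)$ is geometrically reduced, so every multiplicity $e_{i}$ equals $1$ (hence their gcd is $1$) and each residue field $k(\xi_{i})$ is geometrically reduced over $k(s)$, which gives property (F) by Remark \ref{(F)geom}.1. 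Thus $g$ satisfies property (R), and Theorem \ref{suff} (whose proof nowhere uses Proposition \ref{Hoshi-exact}) supplies the exact sequence for $g$; with that citation inserted your transport argument becomes a complete proof. A minor further point: your explanation of where generic geometric integrality of $f$ enters is vague, but since in the running hypotheses $X_{\overline{\eta}}$ is already assumed connected (hence irreducible, $X$ being normal), an imprecise use of an extra hypothesis is not a logical defect of the argument.
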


\begin{prop}(\cite{Mit} Theorem 4.22)
Suppose that $f$ is flat and geometrically reduced.
Moreover, suppose that the sheaf $O_{S}$ is integrally closed in the sheaf $f_{\ast}O_{X}$.
Then the sequence (\ref{exac}) is exact.
\label{Mitsui-exact}
\end{prop}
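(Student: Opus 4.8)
The plan is to reduce the assertion to a descent statement for finite \'etale covers and then to realize the descended cover of $S$ as a normalization, whose \'etaleness is the crux.

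\emph{Reduction.} By part 4 of Remark~\ref{twoexact}, exactness is equivalent to the inclusion $N \subseteq H$, where $H := \mathrm{Im}(\pi_1(X_{\overline{\eta}},\overline{x})\to \pi_1(X,\overline{x}))$ and $N := \mathrm{Ker}(\pi_1(X,\overline{x})\to\pi_1(S,\overline{x}))$. Write $G = \pi_1(X,\overline{x})$. Under the Galois correspondence an open subgroup $U \le G$ corresponds to a connected finite \'etale cover $X' \to X$; the cover $X'\times_S\overline{\eta} \to X_{\overline{\eta}}$ admits a section through the base point iff $H \subseteq U$, and, since $N$ is normal, $X'$ is the pull-back of a finite \'etale cover of $S$ iff $N \subseteq U$. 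As $H$ is closed, $\bigcap_{U\supseteq H}U = H$, so it suffices to prove the following assertion $(\star)$: \emph{every connected finite \'etale cover $X'\to X$ for which $X'\times_S\overline{\eta} \to X_{\overline{\eta}}$ has a section descends to a finite \'etale cover $S'\to S$, i.e.\ $X\times_S S'\cong X'$ over $X$.} Granting $(\star)$, each open $U\supseteq H$ satisfies $N\subseteq U$, whence $N\subseteq H$.

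\emph{Construction of $S'$.} Since $X_{\overline{\eta}}$ is connected and the fibres of $f$ are geometrically reduced, the generic fibre of $f$ is geometrically integral; in particular $K(S)$ is separably closed in $K(X)$. Let $L$ be the separable closure of $K(S)$ in $K(X')$ and let $S'\to S$ be the normalization of $S$ in $L$. Because $L/K(S)$ is finite separable and $S$ is normal and Noetherian, $S'\to S$ is finite; the use of the \emph{separable} closure (rather than the full algebraic closure) is exactly what secures this finiteness. The composite $X'\to X\to S$ is flat with geometrically reduced fibres, since a finite \'etale cover of a geometrically reduced scheme is again geometrically reduced.

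\emph{The crux.} The heart of the matter is that $S'\to S$ is \'etale. Following the device used in \cite{Ho} and \cite{Mit}, I would produce a morphism $Y\to X$ with $Y\to S$ faithfully flat and geometrically normal and descend the question along $Y\to S$: flatness and geometric reducedness of $f$ make the formation of the degree-zero push-forward compatible with base change to the points of $S$, identifying the fibres of $S'\to S$ with the reduced --- hence \'etale --- schemes of connected components of the geometrically reduced fibres of $X'\to S$. Granting that $S'\to S$ is \'etale, $X\times_S S'\to X$ is finite \'etale, so $X\times_S S'$ is normal; since $K(S)$ is separably closed in $K(X)$ one has $L\cap K(X)=K(S)$ and hence $[K(X)\cdot L:K(X)]=\deg(S'/S)$, while the hypothesis that $O_S$ is integrally closed in $f_*O_X$ forces $K(X')=K(X)\cdot L$. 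Thus $X'\to X\times_S S'$ is a finite birational morphism of normal schemes, hence an isomorphism by Zariski's main theorem, which establishes $(\star)$.

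\emph{Main obstacle.} I expect the \'etaleness of $S'\to S$ to be the hard part, and it is concentrated at the points of $S$ of codimension one, where the normalization could a priori ramify. Ruling this out is precisely where the three hypotheses --- flatness of $f$, geometric reducedness of its fibres, and integral-closedness of $O_S$ in $f_*O_X$ --- are simultaneously needed, and it is the step on which the whole proposition turns.
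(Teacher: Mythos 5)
First, a point of comparison: the paper itself gives no proof of this proposition; it is recalled verbatim from \cite{Mit}, Theorem 4.22, as background, so the only argument your attempt can be measured against is Mitsui's. Your formal frame is fine and matches the paper's own machinery: the reduction of exactness to the descent statement $(\star)$ is exactly Remark \ref{twoexact}.4 together with the equivalence $1\Leftrightarrow 3$ of Proposition \ref{essential}, and taking $S'$ to be the normalization of $S$ in the separable closure $L$ of $K(S)$ in $K(X')$ is the paper's condition 4. But your proposal has a genuine gap precisely where you yourself locate it: the \'etaleness of $S'\to S$ \emph{is} the theorem, and you do not prove it --- you write that you ``would produce'' a faithfully flat, geometrically normal $Y\to S$ and then continue ``granting that $S'\to S$ is \'etale.'' The identification of the fibres of the normalization $S'\to S$ with the $\pi_{0}$-schemes of the fibres of $X'\to S$ is exactly the Stein-factorization-type statement that is \emph{not} automatic once $f$ is no longer proper; constructing the auxiliary scheme $Y$ and carrying out the descent along it is the bulk of Mitsui's proof (compare Proposition \ref{Hoshi-exact}), not a routine step that can be deferred. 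An argument that grants its crux is a reduction, not a proof.

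There is also a localized error in your final step. You claim that the hypothesis that $O_{S}$ is integrally closed in $f_{\ast}O_{X}$ forces $K(X')=K(X)\cdot L$. That hypothesis concerns $X$, not $X'$, and no argument is given (nor exists) for this implication; the equality is not a consequence of integral-closedness but of the \emph{section} hypothesis combined with \'etaleness of $S'\to S$, via a count of connected components of geometric generic fibres: $X'_{\overline{\eta}}$ has exactly $[L:K(S)]$ components because $X'$ is normal, $(X\times_{S}S')_{\overline{\eta}}$ has exactly $\deg(S'/S)=[L:K(S)]$ components each isomorphic to $X_{\overline{\eta}}$ because $S'\to S$ is \'etale and $X_{\overline{\eta}}$ is connected, and the degree-one component furnished by the section plus a pigeonhole argument on the induced map of components forces $\deg\bigl(X'/X\times_{S}S'\bigr)=1$. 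This is precisely how the paper argues the implication $4\Rightarrow 3$ inside Proposition \ref{essential}; once this is done your appeal to Zariski's main theorem is unnecessary, since $X'\to X\times_{S}S'$ is already finite \'etale. (A smaller point: your degree formula $[K(X)\cdot L:K(X)]=\deg(S'/S)$ needs more than $L\cap K(X)=K(S)$, since intersection-triviality does not give linear disjointness for non-Galois $L$; what saves it is that $K(S)$ is \emph{algebraically} closed in $K(X)$ by geometric integrality of the generic fibre.) The integral-closedness hypothesis must instead be spent where you left the gap: in the proof that $S'\to S$ is \'etale.
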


Since the schemes $X$ and $S$ are normal, these schemes enjoy the following properties:
\begin{lem}
Let $U$ be a connected locally Noetherian normal scheme.
Write $K(U)$ for the function field of $U$.
\begin{enumerate}
\item Let $\ast$ be a geometric point of $\Spec K(U)$.
Then the homomorphism $\pi_{1}(\Spec K(U), \ast) \rightarrow \pi_{1}(U, \ast)$ is surjective.
\item Let $V \rightarrow U$ be a connected \'etale covering space.
Write $K(V)$ for the function field of $V$.
Let $L$ be an intermediate field of $K(U)\subset K(V)$.
Then the normalization of $U$ in $L$ is an \'etale covering space of $U$.
\end{enumerate}
\label{interfield}
\end{lem}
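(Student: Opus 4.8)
The plan is to prove the two assertions separately, in each case leaning on the Galois-theoretic formalism of the \'etale fundamental group together with the fact that \'etale coverings of a normal base are again normal.

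For (1), I would invoke the standard surjectivity criterion: a morphism $h : X \rightarrow Y$ of connected schemes induces a surjection $\pi_{1}(X, \ast) \rightarrow \pi_{1}(Y, \ast)$ if and only if, for every connected finite \'etale covering $V \rightarrow Y$, the base change $V \times_{Y} X$ is connected. I apply this with $Y = U$, $X = \Spec K(U)$, and $h$ the canonical morphism from the generic point. So let $V \rightarrow U$ be a connected finite \'etale covering. Since $U$ is normal and $V \rightarrow U$ is \'etale, $V$ is normal; being also connected, it is irreducible, hence integral with function field $K(V)$. As $V \rightarrow U$ is finite and dominant, the generic point of $U$ has a single point of $V$ above it, and the generic fibre $V \times_{U} \Spec K(U)$ is the one-point scheme $\Spec K(V)$, which is connected. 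The criterion then yields the desired surjectivity.

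For (2), I would first reduce to the affine case $U = \Spec A$ with $A$ a Noetherian normal domain of fraction field $K(U)$, since finiteness and \'etaleness of $W \rightarrow U$ may both be checked locally. Because $V \rightarrow U$ is \'etale, the extension $K(V)/K(U)$ is finite separable, hence so is the subextension $L/K(U)$; consequently the integral closure $W$ of $A$ in $L$ is a finite $A$-module (finiteness of the integral closure in a finite separable extension of a Noetherian normal domain, via the nondegenerate trace pairing), so that $W \rightarrow U$ is finite and $W$ is a connected normal scheme with function field $L$. The substance of the claim is to upgrade this to \'etaleness, and here I would pass to a Galois closure: let $M/K(U)$ be a Galois closure of $K(V)/K(U)$ with group $G = \mathrm{Gal}(M/K(U))$, and let $\widetilde{V}$ be the normalization of $U$ in $M$. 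Realizing $\widetilde{V}$ as a connected component of a suitable fibre power $V \times_{U} \cdots \times_{U} V$ shows that $\widetilde{V} \rightarrow U$ is again a connected finite \'etale covering, Galois with group $G$, that is, a $G$-torsor over $U$.

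To finish, set $H = \mathrm{Gal}(M/L) \leq G$. The quotient $\widetilde{V}/H$ is, \'etale-locally on $U$, isomorphic to the trivial covering with fibre the coset space $G/H$, hence finite \'etale over $U$; it is moreover normal, connected, and of function field $M^{H} = L$, so it coincides with the normalization $W$ of $U$ in $L$, giving that $W \rightarrow U$ is a finite \'etale covering. The hard part will be exactly this last identification, namely checking that the Galois closure is genuinely \'etale (rather than merely the normalization in a separable extension) and that the quotient of a $G$-torsor by a subgroup stays finite \'etale and is computed by the invariant field $M^{H}$; both points reduce to the good behaviour of finite \'etale coverings under fibre products and under quotients by finite group actions, so I expect no essential difficulty beyond bookkeeping once the torsor structure on $\widetilde{V} \rightarrow U$ is in hand.
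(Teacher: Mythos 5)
Your proof is correct, but it is organized differently from the paper's. The paper disposes of the lemma in one line: assertion 1 is quoted as well-known (your connectivity argument --- \'etale over normal is normal, normal plus connected gives irreducible, so the generic fibre of a connected covering is the single point $\Spec K(V)$ --- is exactly the standard proof of that well-known fact), and assertion 2 is then deduced \emph{from} assertion 1. The intended deduction is group-theoretic: writing $G = \pi_{1}(\Spec K(U), \ast) = \mathrm{Gal}(K(U)\sep/K(U))$ and $Q = \pi_{1}(U, \ast)$, the open subgroup $\mathrm{Gal}(K(U)\sep/L)$ contains $\mathrm{Gal}(K(U)\sep/K(V))$, which is the preimage in $G$ of the open subgroup of $Q$ classifying $V$ and hence contains $\ker(G \twoheadrightarrow Q)$; therefore $\mathrm{Gal}(K(U)\sep/L)$ is itself the preimage of an open subgroup of $Q$, and the connected covering of $U$ classified by that subgroup is normal, connected, finite over $U$ with function field $L$, so it must coincide with $W$. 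You instead prove assertion 2 without using assertion 1 at all, by constructing the Galois closure $\widetilde{V}$ as a connected component of a fibre power of $V$ and realizing $W$ as the quotient $\widetilde{V}/H$ with $H = \mathrm{Gal}(M/L)$. Both routes are sound. The paper's deduction is shorter because it lets the dictionary between open subgroups and connected coverings do all the work; yours is more elementary and self-contained, re-proving precisely the piece of that dictionary that is needed (intermediate coverings of a Galois covering correspond to subgroups), at the cost of the bookkeeping you acknowledge: that the chosen component of the fibre power has function field $M$, that the quotient of a $G$-torsor by $H$ remains finite \'etale, and that $\mathrm{Frac}(B^{H}) = (\mathrm{Frac}\, B)^{H}$ so that the quotient really is the normalization of $U$ in $L$. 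Your sketches of these verifications are accurate, so the proposal stands as a complete, if longer, alternative argument.
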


\begin{proof}
Assertion 1 is well-known.
Since the $\pi_{1}(\Spec K(U), \ast)$-equivariant morphism
$$(\Hom_{U}(\ast,V)\simeq)\Hom_{\Spec K(U)}(\ast,\Spec K(V))\to\Hom_{\Spec K(U)}(\ast,\Spec L)$$
is surjective, there exists a connected \'etale covering space $W\to U$ such that $W\times_{U}\Spec K(U)$ is isomorphic to $\Spec L$ over $K(U)$.
Since $U$ is normal and the morphism $W\to U$ is \'etale, $W$ is also normal.
Since $W$ is finite over $U$, assertion 2 holds.
\end{proof}

We rephrase the exactness of the sequence (\ref{exac}) in terms of \'etale covering spaces of $X$.

\begin{prop}
The following four conditions are equivalent:
\begin{enumerate}
\item $\mathrm{Im}(\pi_{1}(X_{\overline{\eta}}, \overline{x})\rightarrow \pi_{1}(X, \overline{x})) \supset \mathrm{Ker}(\pi_{1}(X, \overline{x}) \rightarrow \pi_{1}(S, \overline{x})).$
\item
Let $C$ be a finite set with a continuous left $\pi_{1}(X, \overline{x})$-action.
Suppose that there exists a $\pi_{1}(X_{\overline{\eta}}, \overline{x})$-orbit of $C$ on which $\pi_{1}(X_{\overline{\eta}}, \overline{x})$ acts trivially.
Then there exist a finite set $D$ with a continuous transitive left $\pi_{1}(S, \overline{x})$-action and a $\pi_{1}(X, \overline{x})$-equivariant isomorphism between $D$ and $C$.
\item Let $X'$ be a connected \'etale covering space of $X$.
Suppose that the \'etale covering space $X_{\overline{\eta}}\times_{X}X'\rightarrow X_{\overline{\eta}}$ has a section.
Then there exist an \'etale covering space $S' \rightarrow S$ and an $X$-isomorphism between $X\times_{S}S'$ and $X'$.
\item Let $X'$ be a connected \'etale covering space of $X$.
Write $K_{X'/S}$ for the separable closure of $K(S)$ in the function field of $X'$.
The normalization $N_{X'/S}$ of $S$ in $K_{X'/S}$ is \'etale over $S$.
\end{enumerate}
\label{essential}
\end{prop}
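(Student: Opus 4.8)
The plan is to run everything through the Galois correspondence between finite étale covering spaces and finite continuous $\pi_{1}$-sets, together with the two features of normal schemes recorded in Lemma \ref{interfield}. Write $G=\pi_{1}(X,\overline{x})$, $G_{S}=\pi_{1}(S,\overline{x})$, and let $\phi\colon\pi_{1}(X_{\overline{\eta}},\overline{x})\to G$ and $\psi\colon G\to G_{S}$ be the homomorphisms occurring in (\ref{exac}); thus $\psi$ is surjective and $\mathrm{Im}\,\phi\subset N:=\mathrm{Ker}\,\psi$ by Remark \ref{twoexact}. I will prove the equivalence by establishing $(1)\Leftrightarrow(2)\Leftrightarrow(3)$ and then $(3)\Rightarrow(4)\Rightarrow(1)$, reading $(1)$ (in view of the automatic reverse inclusion of Remark \ref{twoexact}) as the exactness $N=\mathrm{Im}\,\phi$, equivalently $N\subset\mathrm{Im}\,\phi$.

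For $(1)\Leftrightarrow(2)$ I argue purely group-theoretically, reducing to connected (transitive) $C=G/U$ for an open $U\le G$. A trivial $\pi_{1}(X_{\overline{\eta}})$-orbit is a coset fixed by $\mathrm{Im}\,\phi$, i.e. $\mathrm{Im}\,\phi\subset gUg^{-1}$ for some $g$, while $C$ being pulled back from a connected $G_{S}$-set means precisely that the $G$-action factors through $G/N$, i.e. $N\subset U$. As $N$ is normal the element $g$ is irrelevant, so $(2)$ says $\mathrm{Im}\,\phi\subset U\Rightarrow N\subset U$ for every open $U$; since $\mathrm{Im}\,\phi$ is compact, hence closed, the intersection of the open subgroups containing it is $\mathrm{Im}\,\phi$ itself, and the condition becomes $N\subset\mathrm{Im}\,\phi$, which is $(1)$. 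Then $(2)\Leftrightarrow(3)$ is Grothendieck's Galois theory applied functorially to $X_{\overline{\eta}}\to X\to S$: the fibre functor at $\overline{x}$ carries a connected covering $X'$ to a transitive $G$-set $C$, carries ``the pull-back $X_{\overline{\eta}}\times_{X}X'\to X_{\overline{\eta}}$ has a section'' to ``$C$ has a trivial $\pi_{1}(X_{\overline{\eta}})$-orbit'', and carries $X\times_{S}S'$ to the $G$-set obtained by restricting a $G_{S}$-set along $\psi$, so that $(3)$ is the covering-theoretic transcription of $(2)$.

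The arithmetic heart is $(3)\Leftrightarrow(4)$, which I expect to be the main obstacle. For $(3)\Rightarrow(4)$, given any connected $X'$ I would form the compositum $M:=K(X)\cdot K_{X'/S}$ inside $K(X')$ and let $X''$ be the normalization of $X$ in $M$; by Lemma \ref{interfield}(2) this $X''$ is a connected étale covering of $X$. Because $X_{\overline{\eta}}$ is geometrically connected, $K(S)$ is separably closed in $K(X)$, whence $M=K(X)\otimes_{K(S)}K_{X'/S}$ and the separable closure of $K(S)$ in $M$ is again $K_{X'/S}$; moreover $K_{X'/S}\subset\overline{K(S)}$ is separable over $K(S)$, so the pull-back $X''\times_{X}X_{\overline{\eta}}$ splits over the generic point of $X_{\overline{\eta}}$ and thus acquires a section. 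Condition $(3)$ now yields an étale $S''\to S$ with $X\times_{S}S''\cong X''$; comparing function fields (again using that $K(S)$ is separably closed in $K(X)$) gives $K(S'')=K_{X'/S}$, so the étale $S''$ is the normalization of $S$ in $K_{X'/S}$, i.e. $S''=N_{X'/S}$ is étale, which is $(4)$.

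For $(4)\Rightarrow(1)$ I would descend to the generic points. Since $K(S)$ is separably closed in $K(X)$, the restriction map $\beta\colon\mathrm{Gal}(K(X)\sep/K(X))\to\mathrm{Gal}(K(S)\sep/K(S))$ is surjective, and together with the surjections $q_{X},q_{S}$ of Lemma \ref{interfield}(1) onto $G,G_{S}$ it satisfies $q_{S}\circ\beta=\psi\circ q_{X}$ with $q_{X}(\mathrm{Ker}\,\beta)=\mathrm{Im}\,\phi$. For a connected $X'\leftrightarrow G/U$ with $\mathrm{Im}\,\phi\subset U$, the extension $K_{X'/S}$ corresponds to $\beta(q_{X}^{-1}(U))$, and a direct double-coset computation with the exact sequence $1\to\mathrm{Ker}\,\beta\to\mathrm{Gal}(K(X)\sep/K(X))\xrightarrow{\beta}\mathrm{Gal}(K(S)\sep/K(S))\to1$ shows that $N_{X'/S}$ is étale over $S$ (i.e. $\beta(q_{X}^{-1}(U))\supset\mathrm{Ker}\,q_{S}$) if and only if $N\subset U$. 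Letting $U$ range over all open subgroups containing $\mathrm{Im}\,\phi$ then turns $(4)$ into $N\subset\mathrm{Im}\,\phi$, i.e. $(1)$. The delicate points to nail down are the identification $q_{X}(\mathrm{Ker}\,\beta)=\mathrm{Im}\,\phi$, the finiteness of the normalization $N_{X'/S}\to S$ (normalization in the finite separable extension $K_{X'/S}/K(S)$, for which one invokes that $S$ is Noetherian and normal), and the two function-field comparisons, all of which rest on the geometric connectivity of $X_{\overline{\eta}}$.
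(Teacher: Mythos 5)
Your proposal is correct (including your reading of condition 1: as literally printed it is vacuous by Remark \ref{twoexact}.3, so it must be the exactness inclusion $N\subset\mathrm{Im}\,\phi$), and on three of the four links it coincides with the paper's proof with details filled in: the paper likewise deduces $(1)\Leftrightarrow(2)$ from surjectivity of $\pi_{1}(X,\overline{x})\to\pi_{1}(S,\overline{x})$ (your profinite argument that a closed subgroup is the intersection of the open subgroups containing it is what makes that one-liner work), treats $(2)\Leftrightarrow(3)$ as the Galois correspondence, and proves $(3)\Rightarrow(4)$ by exactly your compositum construction: normalize $X$ in $K(X)K_{X'/S}$, produce a section over $X_{\overline{\eta}}$, apply condition 3, and identify the resulting covering of $S$ with $N_{X'/S}$. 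The genuine difference is the closing link. The paper proves $(4)\Rightarrow(3)$ geometrically: since $N_{X'/S}$ is \'etale, $X\times_{S}N_{X'/S}$ is a connected \'etale covering of $X$; the generic fibres $X_{\overline{\eta}}\times_{X}X'=\overline{\eta}\times_{S}X'$ and $X_{\overline{\eta}}\times_{S}N_{X'/S}$ both have $[K_{X'/S}:K(S)]$ connected components, and the hypothesized section then forces the finite \'etale surjection $X'\to X\times_{S}N_{X'/S}$ to have degree $1$. You instead prove $(4)\Rightarrow(1)$ by a coset computation in $\mathrm{Gal}(K(X)\sep/K(X))$. Both close the cycle; yours keeps all four conditions in a single group-theoretic dictionary and never has to verify that a morphism of schemes is an isomorphism, while the paper's route avoids comparing $\mathrm{Im}\,\phi$ with a Galois-theoretic kernel.

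That comparison is, however, exactly where your remaining work sits, and both of your flagged gaps are repaired by the same device. Let $Y:=X\times_{S}\Spec K(S)\sep$, which is connected (as $X_{\overline{\eta}}$ surjects onto it), normal (normality survives separable field base change), and has function field $K(X)K(S)\sep$. First, for $q_{X}(\mathrm{Ker}\,\beta)=\mathrm{Im}\,\phi$: note that the inclusion your double-coset step actually uses is $q_{X}(\mathrm{Ker}\,\beta)\subset\mathrm{Im}\,\phi$ (without it you only get $N\subset\mathrm{Im}\,\phi\cdot U$, not $N\subset U$), and this is the non-formal one. Lemma \ref{interfield}.1 applied to $Y$ gives $q_{X}(\mathrm{Ker}\,\beta)=\mathrm{Im}(\pi_{1}(Y)\to\pi_{1}(X,\overline{x}))$, so the content is surjectivity of $\pi_{1}(X_{\overline{\eta}},\overline{x})\to\pi_{1}(Y)$; this holds because the purely inseparable extension $\overline{K(S)}/K(S)\sep$ does not change the \'etale site, and connected covers remain connected under extension of algebraically closed base fields. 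This surjectivity is precisely the geometric input that the paper's component count encodes, so the total work is the same, only relocated. Second, a warning on your $(3)\Rightarrow(4)$: you pass from splitting of $X''\times_{X}X_{\overline{\eta}}$ over the generic point to a global section, but in this paper's setting $X_{\overline{\eta}}$ need not be normal or even reduced (no geometric reducedness of $f$ is assumed), and over a non-normal base a generically split connected \'etale cover need not split (think of the cyclic covers of a nodal curve). The fix is again to split the cover over the normal scheme $Y$ first and then base change to $\overline{\eta}$; the paper's own write-up of $3\Rightarrow4$ asserts the section without comment and silently needs the same repair.
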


\begin{proof}
Since the homomorphism $\pi_{1}(X, \overline{x}) \rightarrow \pi_{1}(S, \overline{x})$ is surjective, condition 1 is equivalent to condition 2.
The equivalence of 2 and 3 is clear.

We prove the equivalence of 3 and 4.
Let $X'$ be a connected \'etale covering space of $X$.
Write $K(X')$ for the function field of $X'$, $K_{X'/S}$ for the separable closure of $K(S)$ in $K(X')$, and $N_{X'/S}\rightarrow S$ for the normalization of $S$ in $K_{X'/S}$.

First, we prove the implication $3 \Rightarrow 4$.
The normalization $X_{N}$ of $X'$ in the composite field $K(X)K_{X'/S}$ in $K(X')$ is an \'etale covering space of $X$ by Lemma \ref{interfield}.2.
Moreover, since the morphism $X_{\overline{\eta}}\times_{X}X_{N}\to X_{\overline{\eta}}$ has a section, there exist a finite \'etale covering space $S' \rightarrow S$ and an $X$-isomorphism $X_{N} \simeq X\times_{S}S'$ by condition 3.
Therefore $N_{X'/S}$ is isomorphic to $S'$ over $S$ and thus condition 4 holds.

Next, we prove the implication $4 \Rightarrow 3$.
Suppose that the morphism $X_{\overline{\eta}}\times_{X}X' \rightarrow X_{\overline{\eta}}$ has a section.
By condition 4, $N_{X'/S}$ is an \'etale covering space of $S$.
It suffices to show that the induced morphism $\phi: X' \rightarrow X\times_{S}N_{X'/S}$ is an isomorphism.
Since $X\times_{S}N_{X'/S}$ is \'etale over $X$ and connected, the morphism $\phi$ is finite \'etale surjective.
The number of connected components of $X_{\overline{\eta}}\times_{S}N_{X'/S}$ coincides with the covering degree of $N_{X'/S}$ over $S$.
On the other hand, the number of connected components of $X_{\overline{\eta}}\times_{X}X'=\overline{\eta}\times_{S}X'$ coincides with the extension degree $[K_{X'/S}:K(S)]$.
Therefore, there is a bijection between the set of connected components of $X_{\overline{\eta}}\times_{X}X'$ and that of $X_{\overline{\eta}}\times_{S}N_{X'/S}$.
Since the morphism $X_{\overline{\eta}}\times_{X}X' \rightarrow X_{\overline{\eta}}$ has a section, we can show that the covering degree of $X'$ over $X\times_{S}N_{X'/S}$ is $1$.
Thus, condition 3 holds.
\end{proof}

\begin{rem}
Proposition \ref{essential} holds if $f$ is dominant (even if $f$ is not surjective).
\label{dominant}
\end{rem}

Recall that we do not assume that the scheme $S$ is regular.
Since we cannot use the Zariski-Nagata purity theorem, we show the following technical lemma needed later:

\begin{lem}
Let $S'$ be an integral scheme and $S' \rightarrow S$ a quasi-finite dominant morphism.
\begin{enumerate}
\item Suppose that $f$ is flat and the extension between the function fields of $S'$ and $S$ is separable.
Then the scheme $X\times_{S}S'$ is integral.
\item Moreover, suppose that the scheme $S'$ is normal and the morphism $S' \rightarrow S$ is \'etale over each point of $S$ whose local ring is of dimension $1$.
Then the scheme $X\times_{S}S'$ is normal.
\end{enumerate}
\label{nonZar}
\end{lem}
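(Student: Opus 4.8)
The plan is to prove both assertions by reducing to Serre's criteria and exploiting the two flat morphisms $f\colon X\to S$ and its base change $g\colon Z\to S'$, where I write $Z\deq X\times_{S}S'$, together with a comparison of $Z$ against $X$ along the projection $p\colon Z\to X$. Throughout I use that $X$, $S$, $S'$ are normal, that $f$ (hence $g$) is flat, and that $K(S')/K(S)$ is finite separable (finite because $S'\to S$ is quasi-finite and dominant). I also use that $Z$ is locally Noetherian, which holds in the present essentially-of-finite-type setting.

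For assertion 1 it suffices to show $Z$ is integral. Write $\eta'$ for the generic point of $S'$; the fibre of $g$ over $\eta'$ is $Z_{\eta'}=X_{\eta}\otimes_{K(S)}K(S')$, where $X_{\eta}$ is the generic fibre of $f$, a localization of $X$, hence integral and normal. Since $X_{\overline{\eta}}$ is connected, $K(S)$ is separably algebraically closed in $K(X)$. Writing $K(S')=K(S)(\alpha)$ with separable minimal polynomial and passing to its Galois closure $M$ over $K(S)$, the inclusion $M\cap K(X)=K(S)$ forces $\mathrm{Gal}(M\!\cdot\!K(X)/K(X))\cong\mathrm{Gal}(M/K(S))$, so $\alpha$ stays irreducible over $K(X)$ and $K(X)\otimes_{K(S)}K(S')$ is a field; separability of $K(S')/K(S)$ also makes $Z_{\eta'}$ reduced, so $Z_{\eta'}$ is integral. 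To propagate integrality to $Z$ I invoke the description of associated points of a flat morphism: since $S'$ is integral, $\mathrm{Ass}(\mathcal{O}_{S'})=\{\eta'\}$, whence $\mathrm{Ass}(\mathcal{O}_{Z})=\mathrm{Ass}(\mathcal{O}_{Z_{\eta'}})=\{\zeta\}$ is the single generic point of $Z_{\eta'}$. Thus $Z$ has no embedded points, a unique generic point, and a field as local ring there, so $Z$ is integral.

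For assertion 2 I know $Z$ is integral and verify $R_{1}$ and $S_{2}$. Fix $z\in Z$ with images $x=p(z)$, $s=f(x)$, $s'\in S'$. Going-down for the quasi-finite dominant map $S'\to S$ over the normal scheme $S$ gives $\dim\mathcal{O}_{S',s'}=\dim\mathcal{O}_{S,s}$, and the dimension formula for the flat maps yields $\dim\mathcal{O}_{Z_{s'},z}=\dim\mathcal{O}_{X_{s},x}$ and $\dim\mathcal{O}_{Z,z}=\dim\mathcal{O}_{X,x}$. For $R_{1}$ I classify a codimension-one point $z$ via $\dim\mathcal{O}_{Z,z}=\dim\mathcal{O}_{S',s'}+\dim\mathcal{O}_{Z_{s'},z}=1$. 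If $\dim\mathcal{O}_{S',s'}=0$ then $s'=\eta'$ and $z\in Z_{\eta'}$, which is normal since it is the base change of the normal $X_{\eta}$ along the étale extension $K(S')/K(S)$; hence $\mathcal{O}_{Z,z}=\mathcal{O}_{Z_{\eta'},z}$ is a one-dimensional normal local ring, i.e. a DVR. If $\dim\mathcal{O}_{S',s'}=1$ then $\dim\mathcal{O}_{S,s}=1$, so by hypothesis $S'\to S$ is étale over $s$, whence $p$ is étale over $x$; as $x$ is a codimension-one point of the normal scheme $X$, the ring $\mathcal{O}_{X,x}$ is a DVR, and $\mathcal{O}_{Z,z}$, a localization of an étale $\mathcal{O}_{X,x}$-algebra, is regular. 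This second case is precisely where the codimension-one étaleness hypothesis is indispensable.

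For $S_{2}$ I run a depth computation from the flat depth formula $\mathrm{depth}\,\mathcal{O}_{Z,z}=\mathrm{depth}\,\mathcal{O}_{S',s'}+\mathrm{depth}\,\mathcal{O}_{Z_{s'},z}$ and its analogue for $f$, using that the finite faithfully flat base change $Z_{s'}=X_{s}\otimes_{\kappa(s)}\kappa(s')$ preserves depth, so $\mathrm{depth}\,\mathcal{O}_{Z_{s'},z}=\mathrm{depth}\,\mathcal{O}_{X_{s},x}$. Casing on $\dim\mathcal{O}_{Z,z}=\dim\mathcal{O}_{X,x}$, the subcases of dimension $\le 1$ hold because $Z$ is integral; when $\dim\mathcal{O}_{X,x}\ge 2$ one uses the $S_{2}$-property of the normal rings $\mathcal{O}_{S',s'}$ and $\mathcal{O}_{X,x}$ (subtracting $\mathrm{depth}\,\mathcal{O}_{S,s}$ to get $\mathrm{depth}\,\mathcal{O}_{X_{s},x}\ge 1$ when $\dim\mathcal{O}_{S,s}=1$, and the generic-fibre normality of $Z_{\eta'}$ when $\dim\mathcal{O}_{S,s}=0$) to conclude $\mathrm{depth}\,\mathcal{O}_{Z,z}\ge 2$. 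The \emph{main obstacle} is that the fibres of $f$ need not be reduced or normal—they carry the multiplicities $e_{i}$—so the standard ``flat with geometrically normal fibres'' criterion is unavailable; the device circumventing this is to measure $Z$ against $X$ through $p$ in codimension one and to use only the $S_{2}$-property of $X$ and $S'$, never any normality of the fibres, in the depth estimate.
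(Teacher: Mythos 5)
Your proof is correct and follows essentially the same route as the paper's: assertion 1 via integrality of the generic fibre $X\times_{S}\Spec K(S')$ propagated to $X\times_{S}S'$ by flatness, and assertion 2 via Serre's criterion, with $(R_{1})$ coming from the flat dimension formula plus \'etaleness of $S'\rightarrow S$ in codimension $\leq 1$, and $(S_{2})$ from the flat depth formula plus normality of $S'$ and $X$. The only differences are that you make explicit some steps the paper leaves implicit (the associated-points argument for propagating integrality, the going-down dimension equality $\dim O_{S',s'}=\dim O_{S,s}$), and that in the case $\dim O_{S,s}=1$, $\dim O_{Z,z}\geq 2$ you run a depth-subtraction computation where the paper simply observes that such points lie in the locus where $X\times_{S}S'\rightarrow X$ is \'etale, hence are normal points and satisfy $(S_{2})$ automatically.
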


\begin{proof}
Write $K(S')$ for the function field of $S'$.
Since $f$ is generically geometrically connected and $K(S')$ is separable over $K(S)$, the scheme $X\times_{S}\Spec K(S')$ is integral.
Therefore, assertion 1 follows from flatness of $f$.

By Serre's criterion for normality, it suffices to show that the scheme $X\times_{S}S'$ satisfies ($R_{1}$) and ($S_{2}$) to prove assertion 2.
Any point of the scheme $X\times_{S}S'$ over a point of $S'$ whose local ring is of dimension $\leq1$ is normal by the assumption on the morphism $S' \rightarrow S$.
Since $f$ is flat, the image of any point of the scheme $X\times_{S}S'$ whose local ring is of dimension $1$ is a point of $S'$ whose local ring is of dimension $\leq1$.
Therefore, $X\times_{S}S'$ satisfies ($R_{1}$).
Since $f$ is flat and $S'$ is normal, any point of the scheme $X\times_{S}S'$ over a point of $S'$ whose local ring is of dimension $\geq2$ is of depth $\geq2$.
Therefore, the scheme $X\times_{S}S'$ satisfies ($S_{2}$).
\end{proof}

\begin{prop}
Suppose that $f$ is flat or $S$ is regular.
Then the four conditions in Proposition \ref{essential} are equivalent to the following condition:
\begin{enumerate}
\setcounter{enumi}{4}
\item Let $X'$ be a connected \'etale covering space of $X$.
Write $K_{X'/S}$ for the separable closure of $K(S)$ in the function field of $X'$.
The normalization $N_{X'/S}$ of $S$ in $K_{X'/S}$ is \'etale over each point of $S$ whose local ring is of dimension $1$.
\end{enumerate}
\label{essprop}
\end{prop}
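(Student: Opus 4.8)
The plan is to prove the nontrivial implication, since $4\Rightarrow 5$ is immediate: a morphism that is \'etale over all of $S$ is in particular \'etale over every point whose local ring has dimension $1$. For the converse $5\Rightarrow 4$, fix a connected \'etale covering space $X'\to X$ and write $F=K(S)$, $E=K(X)$, $M=K(X')$ and $L=K_{X'/S}$. Using Lemma \ref{interfield}.2 I would first reduce to the case where $M/E$ is Galois: replacing $X'$ by a connected component of its Galois closure over $X$ enlarges $L$ to $L''=K_{X''/S}$, and $N_{X'/S}$ is the normalization of $S$ in the intermediate field $F\subseteq L\subseteq L''$, so once $N_{X''/S}\to S$ is known to be \'etale (hence a connected \'etale cover, as its function field is a field), Lemma \ref{interfield}.2 forces $N_{X'/S}\to S$ to be \'etale too. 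So assume $M/E$ is Galois with group $G$. Since $X_{\overline\eta}$ is integral, $F$ is algebraically closed in $E$ and $E/F$ is separable; hence $L\cap E=F$ and $EL=E\otimes_F L$ is a field with $[EL:E]=[L:F]=:m$. Writing $H=\mathrm{Gal}(M/EL)\le G$, we have $M^H=EL$ and $[G:H]=m$.

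In the case where $S$ is regular I would invoke the Zariski--Nagata purity theorem. The finite dominant morphism $N_{X'/S}\to S$ has normal source and is generically \'etale because $L/F$ is separable; by condition 5 its non-\'etale locus $B\subseteq S$ contains no point of codimension $\le 1$, so $B$ is closed of codimension $\ge 2$. Over $U=S\setminus B$ the restriction is finite \'etale, and purity ($\pi_1(U)\cong\pi_1(S)$) extends it to a finite \'etale cover of $S$, which is normal with function field $L$ and therefore coincides with $N_{X'/S}$. Thus $N_{X'/S}\to S$ is \'etale, i.e.\ condition 4 holds.

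In the case where $f$ is flat I would transport the problem to $X$. First, condition 5 lets me apply Lemma \ref{nonZar}: since $N_{X'/S}$ is normal, $L/F$ is separable, and $N_{X'/S}\to S$ is \'etale over every point of $S$ of dimension $1$, the scheme $Z:=X\times_S N_{X'/S}$ is integral and normal with function field $EL$. On the other hand, because $X'\to X$ is finite \'etale and Galois with group $G$, the quotient $X'/H$ exists, is normal and integral with function field $M^H=EL$, and $X'/H\to X$ is finite \'etale, being a quotient of a Galois \'etale cover. Both $Z$ and $X'/H$ are therefore normal, integral, and finite over $X$ with function field $EL$, so each is the normalization of $X$ in $EL$; hence $Z\cong X'/H$ over $X$. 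In particular $Z\to X$, the base change of $N_{X'/S}\to S$ along the faithfully flat morphism $f$, is finite \'etale, and descending along $f$ (both flatness and the vanishing of $\Omega$ descend along a faithfully flat base change) shows that $N_{X'/S}\to S$ is \'etale, giving condition 4.

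The main obstacle is the flat case, which is also the reason the two hypotheses split the argument. Over a normal but non-regular base, ``\'etale in codimension $1$'' does not imply ``\'etale'' (for instance the quotient map for an $A_1$-singularity is \'etale away from the codimension-$2$ singular point), so purity on $S$ is unavailable and the covering space $X'$ must enter essentially. The delicate point is the identification $Z\cong X'/H$: a priori $Z$ is only a fibre product and could fail to be normal, and it is exactly Lemma \ref{nonZar} (Serre's criterion $R_1+S_2$, where the flatness of $f$ is used) that certifies $Z$ as the normalization of $X$ in $EL$ and hence as the manifestly \'etale quotient $X'/H$. I also expect the bookkeeping around separability and the equality $EL=E\otimes_F L$ to rely on the standing hypothesis that $X_{\overline\eta}$ is integral, the finiteness of the normalizations involved, and the final descent step to use that $f$ is faithfully flat (flat and surjective).
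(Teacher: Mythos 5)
Your proof is correct in structure and is essentially the paper's argument: the trivial implication $4\Rightarrow 5$; Zariski--Nagata purity (Proposition \ref{ZN}) in the regular case; and, in the flat case, Lemma \ref{nonZar} to see that $Z=X\times_{S}N_{X'/S}$ is integral and normal, identification of $Z$ with the normalization of $X$ in the compositum $K(X)K_{X'/S}$, \'etaleness of that normalization over $X$, and faithfully flat descent along $f$ to conclude that $N_{X'/S}\rightarrow S$ is \'etale. The one real difference is local: where the paper simply quotes Lemma \ref{interfield}.2 (the normalization of $X$ in any intermediate field of $K(X)\subset K(X')$ is an \'etale covering space of $X$), you re-derive this statement for Galois covers via the quotient $X'/H$ with $H=\mathrm{Gal}(M/EL)$, which forces a preliminary reduction to the Galois closure --- a reduction that itself invokes Lemma \ref{interfield}.2, applied to $S$. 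So the Galois detour buys nothing: applying Lemma \ref{interfield}.2 directly to $X$ and the field $EL$ removes both the reduction and the quotient construction, and the rest of your flat case is then literally the paper's proof.

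One assertion in your setup is false in the generality of the paper and should be deleted: ``since $X_{\overline{\eta}}$ is integral, $F$ is algebraically closed in $E$ and $E/F$ is separable.'' The paper assumes only that $X_{\overline{\eta}}$ is connected (hence irreducible); it need not be reduced, $E/F$ need not be separable, and $F$ is in general only \emph{separably} closed in $E$ --- indeed, permitting non-geometrically-reduced situations is the very point of the paper. Fortunately nothing downstream depends on these claims: $L\cap E=F$, and the fact that $E\otimes_{F}L$ is a field of degree $[L:F]$ over $E$, follow from separable closedness of $F$ in $E$ (equivalently, connectedness of $X_{\overline{\eta}}$) together with separability of $L/F$; and the integrality of $Z$ and the computation of its function field are in any case supplied by Lemma \ref{nonZar}.1, which you cite. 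So this is a flaw of justification, not of architecture, but as written the sentence asserts something unavailable (and generally untrue) in the setting of Proposition \ref{essprop}.
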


\begin{proof}
The implication $4\Rightarrow 5$ is clear.
We prove the implication $5\Rightarrow 4$.
By condition 5, the morphism $N_{X'/S} \rightarrow S$ is \'etale over each point of $S$ whose local ring is of dimension $\leq 1$.
If $S$ is regular, the morphism $N_{X'/S} \rightarrow S$ is \'etale by the Zariski-Nagata purity theorem.
Hence we may assume that $f$ is flat.
Then the scheme $X\times_{S}N_{X'/S}$ is connected normal by Lemmas \ref{nonZar}.1 and \ref{nonZar}.2.
Since the scheme $X\times_{S}N_{X'/S}$ coincides with the normalization of $X$ in the composite field $K(X)K_{X'/S}$ in the function field of $X'$, the morphism $X\times_{S}N_{X'/S} \rightarrow X$ is \'etale by Lemma \ref{interfield}.2.
Since $f$ is faithfully flat, the morphism $X\times_{S}N_{X'/S} \rightarrow X$ is also \'etale.
We finish the proof of Proposition \ref{essprop}.
\end{proof}

\begin{dfn}
Let $\{\iota_{i}: k \hookrightarrow K_{i}\}_{i\in I}$ be a set of inclusions of fields.
We say that the inclusions $\{ \iota_{i} \}$ satisfy property (F) if the following condition is satisfied:\\
(F): For any algebraic separable extension $L_{i}$ of $K_{i} \;(i\in I$) and any subfield $l$ of the product ring $\prod_{i\in I}L_{i}$ which is algebraic over the diagonal subfield $k$ defined by $\iota_{i}$, the extension $k \subset l$ is separable.
\label{(F)}
\end{dfn}

\begin{rem}
\begin{enumerate}
\item If $K_{i}$ is geometrically reduced over $k$ for some $i$, the inclusions $\{ \iota_{i} \}$ satisfy property (F).
In fact, if $k$ is purely inseparably closed in $K_{i}$ (i.e., $k^{p^{-\infty}}\cap K_{i}=k$) for some $i$, the inclusions $\{ \iota_{i} \}$ satisfy property (F).
\item We discuss property (F) in Section \ref{app}.
\end{enumerate}
\label{(F)geom}
\end{rem}

\begin{dfn}
We say that $f$ satisfies property (R) if the following condition is satisfied.\\
(R): Let $s$ be a point of $S$ whose local ring is of dimension $1$.
Write $\xi_{i}\;(i\in I)$ for the generic points of the scheme $f^{-1}(s)$, $e_{i}$ for the multiplicity of $\xi_{i}$, and $k(\xi_{i})$ (resp.\,$k(s)$) for the residual field of $\xi$ (resp.\,$s$).
Then $\mathrm{gcd}_{i\in I}e_{i}=1$ and the inclusions $k(s) \hookrightarrow k(\xi_{i})\;(i\in I)$ satisfy property (F).
\end{dfn}

We prove the main theorem of the present paper (cf.\,Theorem \ref{introthm})

\begin{thm}
Suppose that $f$ satisfies property (R) and one of the following conditions is satisfied:
\begin{multicols}{2}
\begin{itemize}
\item The morphism $f$ is flat.
\item The scheme $S$ is regular.
\end{itemize}
\end{multicols}
Then the sequence (\ref{exac}) is exact.
\label{suff}
\end{thm}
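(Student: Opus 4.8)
The plan is to reduce, via Proposition \ref{essprop}, the exactness of (\ref{exac}) to its condition 5, and then to verify condition 5 by a uniform local analysis at each codimension-one point, the dichotomy ``$f$ flat or $S$ regular'' being absorbed entirely into Proposition \ref{essprop}. Fix a connected \'etale covering $X'\to X$ and a point $s\in S$ with $\dim\mathcal{O}_{S,s}=1$; since $S$ is normal, $A:=\mathcal{O}_{S,s}$ is a DVR with fraction field $F=K(S)$, residue field $\kappa=k(s)$ and uniformizer $\pi$. Writing $B$ for the integral closure of $A$ in $K_{X'/S}$, I must show that $B/A$ is \'etale, i.e. that at each maximal ideal $\mathfrak{q}_j$ of $B$, with ramification index $\epsilon_j=v_{\mathfrak{q}_j}(\pi)$ and residue field $\kappa_j$, one has $\epsilon_j=1$ and $\kappa_j/\kappa$ separable. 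Note first that each generic point $\xi_i$ of $f^{-1}(s)$ is a codimension-one point of $X$: the inequality $\dim\mathcal{O}_{X,\xi_i}\le\dim\mathcal{O}_{S,s}+\dim\mathcal{O}_{f^{-1}(s),\xi_i}=1+0$ forces $\mathcal{O}_{X,\xi_i}$ to be a DVR, with $v_{\xi_i}(\pi)=e_i$ and residue field $\lambda_i=k(\xi_i)$.

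The core of the argument is a comparison between the cover $X'\to X$ and the extension $K_{X'/S}/F$ through the compositum $M:=K(X)\cdot K_{X'/S}\subseteq K(X')$. Because $X_{\overline{\eta}}$ is connected, $F$ is separably closed in $K(X)$; as $K_{X'/S}/F$ is separable, this makes $K(X)$ and $K_{X'/S}$ linearly disjoint over $F$, so $M=K(X)\otimes_F K_{X'/S}$ is a field, and the normalization of $X$ in $M$ is finite \'etale over $X$ by Lemma \ref{interfield}. The key step---and the one I expect to be the main obstacle---is to show that for every component $\xi_i$ and every maximal ideal $\mathfrak{q}_j$ of $B$ there is a valuation $w$ of $M$ simultaneously restricting to $v_{\xi_i}$ on $K(X)$ and to $v_{\mathfrak{q}_j}$ on $K_{X'/S}$. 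Here the transcendence of $K(X)/F$ prevents a naive completion of $K(X)$; instead I would complete $K(X)$ at the divisorial valuation $v_{\xi_i}$ to a complete field $\widehat{K}$ with residue field $\lambda_i$, and identify the extensions of $v_{\xi_i}$ to the finite extension $M/K(X)$ with the factors of $K_{X'/S}\otimes_F\widehat{K}\cong\bigl(K_{X'/S}\otimes_F\widehat{F}\bigr)\otimes_{\widehat{F}}\widehat{K}$. Since $K_{X'/S}/F$ is separable, $K_{X'/S}\otimes_F\widehat{F}$ splits as the product of the completions indexed by the $\mathfrak{q}_j$, and each resulting block $\widehat{(K_{X'/S})}_{\mathfrak{q}_j}\otimes_{\widehat{F}}\widehat{K}$ is a nonzero $\widehat{K}$-algebra; any of its residue fields provides the desired $w$ lying over both $v_{\xi_i}$ and $\mathfrak{q}_j$.

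Granting such $w$, the conclusion is formal. Since $X'\to X$ is \'etale and $w$ lies over the codimension-one point $\xi_i$, the place $w$ is unramified over $v_{\xi_i}$ with separable residue extension; hence $w(\pi)=v_{\xi_i}(\pi)=e_i$ and $k(w)/\lambda_i$ is finite separable. Comparing with $K_{X'/S}$ gives $e_i=e(w/\mathfrak{q}_j)\,\epsilon_j$, so $\epsilon_j\mid e_i$; as this holds for every $i$, one gets $\epsilon_j\mid\gcd(e_1,\dots,e_n)=1$ and therefore $\epsilon_j=1$. For the residue fields, each choice $w=w(i,j)$ yields an embedding $\kappa_j\hookrightarrow k(w)=:L_i$ over $\kappa$ with $L_i/\lambda_i$ finite separable; assembling these over all $i$ gives a $\kappa$-embedding of $\kappa_j$ into $\prod_i L_i$ lying over the diagonal copy of $\kappa$, with image algebraic over $\kappa$. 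Property (F) for the inclusions $k(s)\hookrightarrow k(\xi_i)$ then forces $\kappa_j/\kappa$ to be separable. Thus $B/A$ is \'etale at every $\mathfrak{q}_j$, condition 5 of Proposition \ref{essprop} holds, and the sequence (\ref{exac}) is exact.
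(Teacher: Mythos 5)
Your proposal takes essentially the same route as the paper: the identical reduction via Proposition \ref{essprop} to its condition 5 (with the flat/regular dichotomy absorbed into that proposition), followed by a ramification analysis of the discrete valuation ring extensions $O_{S,s}\subset O_{N_{X'/S},n}$ at codimension-one points. The paper's own proof ends by merely asserting that these extensions are unramified ``by the hypothesis,'' so your compositum argument---\'etaleness of the normalization of $X$ in $K(X)K_{X'/S}$ via Lemma \ref{interfield}, the existence of a valuation $w$ lying simultaneously over $v_{\xi_i}$ and $\mathfrak{q}_j$ giving $\epsilon_j\mid\gcd(e_1,\dots,e_n)=1$, and property (F) for the residue extensions---is a correct filling-in of exactly the step the paper leaves implicit.
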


\begin{proof}
By Remark \ref{twoexact}.4, it suffices to show that
$$\mathrm{Ker}(\pi_{1}(X, \overline{x}) \rightarrow \pi_{1}(S, \overline{x})) \subset \mathrm{Im}(\pi_{1}(X_{\overline{\eta}}, \overline{x})\rightarrow \pi_{1}(X, \overline{x})).$$
Let $X' \rightarrow X$ be a finite \'etale covering space.
Write $K_{X'/S}$ for the separable closure of $K(S)$ in the function field of $X'$.
By Proposition \ref{essprop}, it suffices to show that the normalization $N_{X'/S}$ of $S$ in $K_{X'/S}$ is finite \'etale over $S$ at each point of $N_{X'/S}$ whose local ring is of dimension $1$.
Let $n$ be such a point of $N_{X'/S}$.
Write $s$ for the image of $n$ in $S$.
It suffices to show that the extension of the discrete valuation rings $O_{S,s} \subset O_{N_{X'/S},n}$ is unramified.
Therefore, the assertion follows from the hypothesis of Theorem \ref{suff}. 
\end{proof}

\begin{rem}
If the morphism $f$ is not flat and the scheme $S$ is not regular, Theorem \ref{suff} does not hold in general (cf.\,Example \ref{norreg}.1).
\end{rem}

\label{suffsec}

\section{Lemmas for Dedekind schemes}
In this section, we discuss some properties of Dedekind schemes which we use in Section \ref{necsection}.
\label{Lemdede}
\subsection{A fundamental lemma}
We prove a lemma for Dedekind schemes needed later.

\begin{lem}
Let $R$ be a strictly henselian discrete valuation ring.
Write $K$ for the field of fractions of $R$.
Let $K'$ be a finite tamely ramified extension field of $K$.
Write $R'$ for the normalization of $R$ in $K'$ and $e'$ for the ramification index of the extension $R'/R$.
Let $A$ be a discrete valuation ring which dominates $R$ such that the ramification index of the extension $A/R$ is $e$.
Suppose that the field of fractions $L$ of $A$ is geometrically connected over $K$ and $e$ is divisible by $e'$.
Then the normalization $A'$ of $A\otimes_{R}R'$ (cf.\,Lemma \ref{nonZar}.1) is \'etale over $A$.
\label{essnec}
\end{lem}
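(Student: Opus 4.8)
The plan is to make the whole situation explicit using the structure theory of tame extensions over a strictly henselian discrete valuation ring. First I would record that, since $R$ is strictly henselian, its residue field is separably closed, so the tame extension $K'/K$ is totally ramified with trivial residue extension and $e'$ is prime to the residue characteristic. As $R$ contains all $e'$-th roots of unity (these are simple roots of $X^{e'}-1$ over the separably closed residue field and lift by Hensel's lemma), the extension is Kummer: fixing a uniformizer $\pi$ of $R$ we get $K'=K(\pi^{1/e'})$ and $R'=R[\pi^{1/e'}]\cong R[T]/(T^{e'}-\pi)$. In particular $A\otimes_{R}R'\cong A[T]/(T^{e'}-\pi)$.

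Next I would exploit the divisibility $e'\mid e$. Writing $\varpi$ for a uniformizer of $A$ and $m=e/e'$, the domination of $R$ by $A$ gives $\pi=u\varpi^{e}=u(\varpi^{m})^{e'}$ with $u\in A^{\times}$. Setting $Y=T/\varpi^{m}$, the defining relation becomes $Y^{e'}=u$, and since $\varpi^{m}\in L$ we have $\mathrm{Frac}(A[T]/(T^{e'}-\pi))=L(Y)$. The point is that base-changing the totally ramified $R'/R$ along the ramified $A/R$ cancels the ramification, replacing the root of a uniformizer by the root of a \emph{unit}. To control the fraction field I would invoke Lemma \ref{nonZar}.1: the morphism $\Spec A \to \Spec R$ is flat (as $A$ is torsion-free over a discrete valuation ring) with geometrically connected generic fiber $\Spec L$, and $K'/K$ is separable, so $A\otimes_{R}R'$ is integral; hence $Y^{e'}-u$ is irreducible over $L$ and $L':=\mathrm{Frac}(A')=L(Y)$.

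Finally I would identify the normalization. Because $u$ is a unit of $A$ and $e'$ is prime to the residue characteristic, the derivative $e'Y^{e'-1}$ is a unit of $A[Y]/(Y^{e'}-u)$, so $A\to A[Y]/(Y^{e'}-u)$ is finite and standard \'etale; in particular $A[Y]$ is regular, hence normal, with fraction field $L'$. Since $A[T]\subseteq A[Y]$ (as $T=\varpi^{m}Y$) and a normal domain is integrally closed in its fraction field, $A[Y]$ is exactly the normalization $A'$ of $A\otimes_{R}R'$. Thus $A'=A[Y]/(Y^{e'}-u)$ is \'etale over $A$, as desired.

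The main obstacle I anticipate is the first step: legitimately reducing $R'$ to the Kummer form $R[\pi^{1/e'}]$, which is precisely where strict henselianness is used, both to trivialize the residue extension and to supply the $e'$-th roots of unity that make the extension Kummer. Once the extension is put in this form, the cancellation of ramification forced by $e'\mid e$ and the identification of the normalization are formal.
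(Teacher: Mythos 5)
Your proof is correct, and while it shares its first half with the paper's argument, the second half takes a genuinely different route. Both you and the paper use strict henselianness of $R$ plus tameness to put $R'$ in Kummer form $R'\cong R[T]/(T^{e'}-\pi)$, and both exploit the divisibility $e'\mid e$ to ``cancel'' the ramification. From there the paper passes to a strict henselization $\widetilde{A}$ of $A$, where a primitive $e'$-th root of unity and an $e'$-th root of the relevant unit are available, so that $T^{e'}-\pi$ splits into $e'$ distinct linear factors and $\widetilde{A}\otimes_{A}A'$ is exhibited as a product of $e'$ copies of $\widetilde{A}$; \'etaleness of $A'$ over $A$ is then implicitly descended along the faithfully flat map $A\rightarrow\widetilde{A}$ (and one must also know that normalization is compatible with this base change). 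You instead stay over $A$ itself: the substitution $Y=T/\varpi^{m}$ turns the equation into $Y^{e'}=u$ with $u\in A^{\times}$, which defines a standard \'etale $A$-algebra since $Y$ and $e'$ are units there, and you identify $A[Y]/(Y^{e'}-u)$ with the normalization $A'$ by a sandwich argument: it contains $A\otimes_{R}R'$, is integral over it, has the same fraction field (here your appeal to Lemma \ref{nonZar}.1 for integrality is the correct use of the hypothesis that $L$ is geometrically connected over $K$), and is normal because it is \'etale over the regular ring $A$. Your route buys self-containedness --- no roots of unity in $\widetilde{A}$, no commutation of normalization with strict henselization, no descent --- at the cost of verifying the standard \'etale presentation and the normalization identification by hand. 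One point worth stating explicitly: $e'$ is a unit in $A$ because $A$ dominates $R$, so the residue field of $A$ contains that of $R$ and in particular has the same characteristic, to which $e'$ is prime by tameness.
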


\begin{proof}
Let $\widetilde{A}$ be a strict henselization of $A$.
Then $\widetilde{A}\otimes_{A}A'$ is the normalization of $\widetilde{A}\otimes_{R}R'$ (in its total ring of fractions).
Therefore, it suffices to show that $\widetilde{A}\otimes_{A}A'$ is the product ring of $e'$ copies of $\widetilde{A}$.
Let $\varpi$ (resp.\,$\varpi '$; $\varpi_{\widetilde{A}}$) be a uniformizer of $R$ (resp.\,$R'$; $\widetilde{A}$).
There exists a unit $u'$ (resp.\,$u_{\widetilde{A}}$)  of $R'$ (resp.\,$\widetilde{A}$) such that $\varpi =u'(\varpi ')^{e'}$ (resp.\,$\varpi =u_{\widetilde{A}}(\varpi_{\widetilde{A}})^{e}$).
Since there exists a unit $v'$ (resp.\,$v_{\widetilde{A}}$) of $R'$ (resp.\,$\widetilde{A}$) which satisfies that $(v')^{e'}=u'$ (resp.\,$v_{\widetilde{A}}^{e'}=u_{\widetilde{A}}$), we may assume that $(\varpi ')^{e'}=\varpi$ (resp.\,$(\varpi_{\widetilde{A}})^{e}=\varpi$).
Thus, $R'$ is isomorphic to $R[T]/(T^{e'}-\varpi)$ and $\widetilde{A}\otimes_{R}R'$ is isomorphic to $\widetilde{A}[T]/\underset{1\leq i \leq e'}{\prod}(T-\zeta_{e'}^{i}(\varpi_{\widetilde{A}})^{\frac{e}{e'}})$.
Here, $\zeta_{e'}$ is a primitive $e'$-th root of unity in $\widetilde{A}$.
Therefore, $\widetilde{A}\otimes_{A}A'$ is isomorphic to $\underset{1\leq i \leq e'}{\prod}\widetilde{A}[T]/(T-\zeta_{e'}^{i}(\varpi_{\widetilde{A}})^{\frac{e}{e'}})$.
We finished the proof of Lemma \ref{essnec}.
\end{proof}

\subsection{Examples of Dedekind schemes}
We discuss whether there exists a convenient (cf.\,Definition \ref{(T)dfn}) tame covering space of a given Dedekind scheme.

\begin{dfn}
\begin{enumerate}
\item Let $S$ be a scheme.
We shall say that $S$ is a \textit{Dedekind scheme} if $S$ is connected, locally Noetherian, normal, and of dimension $1$.
\item Let $S$ be a Dedekind scheme.
We say that $S$ has property (T) if, for each closed point $s\in S$ and a prime number $l$ which is not divisible by the characteristic of the residual field of $s$, there exist a normal scheme $S'$ and a finite dominant morphism $S'\to S$ which satisfy the following conditions:
\begin{itemize}
\item The morphism $S' \to S$ is finite Galois \'etale over $S\setminus\{s\}$.
\item For any closed point $s'$ over $s$, the ramification index of $S'\to S$ at $s'$ (, which independent of the choice of $s'$ since $S'$ is Galois over $S$,) is $l$.
\end{itemize}
\end{enumerate}
\label{(T)dfn}
\end{dfn}

\begin{rem}
The conditions on $S'$ in Definition \ref{(T)dfn}.2 is equivalent to the following conditions:
\begin{itemize}
\item
The ramification index of $S'\to S$ at each point of $S'$ over $s$ is equal to $1$ or $l$.
\item
There exists a point of $S'$ over $s$ where the ramification index of the morphism $S'\to S$ is equal to $l$.
\item The morphism $S' \to S$ is finite \'etale over $S\setminus\{s\}$.
\end{itemize}
\label{weak}
\end{rem}

\begin{lem}
Let $R$ be a semi-local Dedekind domain (hence a principal ideal domain).
Then the scheme $S=\Spec R$ satisfies property (T).
\label{semi}
\end{lem}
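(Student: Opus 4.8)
The plan is to construct, for the given closed point $s$ and prime $l$, a finite cover of $S=\Spec R$ by exhibiting a single degree-$l$ polynomial whose reduction is tailored separately at $s$ and at the remaining maximal ideals. First I would record the reductions. Since $R$ is a semi-local Dedekind domain it is a principal ideal domain with finitely many maximal ideals $\m_{s},\m_{1},\ldots,\m_{r}$; write $\m_{s}=(\pi)$ and $K$ for the fraction field. By Remark \ref{weak} it suffices to produce a normal scheme $S'$ together with a finite dominant morphism $S'\to S$ that is \'etale over $S\setminus\{s\}$ and totally ramified of index $l$ at $s$; in particular I need not build a Galois cover by hand.

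Next I would choose the polynomial by the Chinese Remainder Theorem. Pick a monic $f\in R[T]$ of degree $l$ subject to the congruences $f\equiv T^{l}-\pi \pmod{\m_{s}^{2}[T]}$, so that $f$ becomes Eisenstein at $\m_{s}$, and $f\equiv g_{i}\pmod{\m_{i}[T]}$ for each $i$, where $g_{i}$ is a fixed separable monic polynomial of degree $l$ over the residue field $k(\m_{i})$ (such a $g_{i}$ exists over any field). Since the ideals $\m_{s}^{2},\m_{1},\ldots,\m_{r}$ are pairwise comaximal, each coefficient of $f$ may be chosen to meet all the prescribed residues, so such an $f$ exists. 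Being Eisenstein at $\m_{s}$, $f$ is irreducible over $K$, so $L=K[T]/(f)$ is a degree-$l$ field extension; let $R'$ be the integral closure of $R$ in $L$ and set $S'=\Spec R'$.

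Then I would verify the three required properties. As $L/K$ is totally and tamely ramified at $s$ (here $l$ is prime to the residual characteristic), it is separable, so $R'$ is finite over $R$ and is again a semi-local Dedekind domain. At $s$ the Eisenstein property forces a unique prime of $R'$ above $\m_{s}$, totally ramified of index $l$, and this ramification is tame. Over each $\m_{i}$ with $i\neq s$, the reduction $g_{i}$ is separable and $f$ is monic, so $R[T]/(f)$ is finite \'etale over $R$ at $\m_{i}$, hence normal there, hence coincides with $R'$ locally; thus $S'\to S$ is \'etale over $S\setminus\{s\}$. Invoking Remark \ref{weak} then finishes the argument.

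The one real obstacle is concealed in the choice of the $g_{i}$, namely the maximal ideals $\m_{i}$ of residual characteristic $l$, which occur only in mixed characteristic. The tempting construction $L=K(\pi^{1/l})$ breaks down precisely here: modulo such an $\m_{i}$ the polynomial $T^{l}-\pi$ reduces to $(T-\overline{\pi}^{\,1/l})^{l}$, which is inseparable, so the radical extension is forced to ramify above $l$; and one cannot repair this by adjoining $\zeta_{l}$, since $K(\zeta_{l})/K$ is itself ramified above $l$. Allowing $f$ to carry arbitrary lower-order terms modulo $\m_{i}$, chosen to make its reduction separable, is exactly what sidesteps the difficulty. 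In equal characteristic, or whenever $R\supseteq\mathbb{Q}$ (the situation of Proposition \ref{introneceexam}), no maximal ideal of $R$ has residual characteristic $l$, and the plain Kummer cover $K(\pi^{1/l})$ already suffices.
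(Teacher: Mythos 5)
Your proof is correct and takes essentially the same approach as the paper: both choose, via the Chinese Remainder Theorem, a monic degree-$l$ polynomial that is Eisenstein at the chosen point $s$ and has separable reduction at every other maximal ideal, and then conclude with Remark \ref{weak}. The paper's proof simply fixes the explicit trinomial $T^{l}-aT-b$, whose reduction is $T^{l}-T-1$ at maximal ideals of residual characteristic $l$ and $T^{l}-1$ at the remaining ones; this is a particular instance of your choice of separable polynomials $g_{i}$.
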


\begin{proof}
Write $K$ for the field of fractions of $R$, $\m_{i} \;(1\leq i \leq n)$ for the maximal ideals of $R$, and $p_{i}\;(1\leq i \leq n)$ for the characteristic of $R/\m_{i}$.
Let $l$ be a prime number which is not divisible by $p_{1}$.
By Chinese Remainder Theorem, we can choose elements $a$ and $b$ of $R$ which satisfy the following conditions:
\begin{multicols}{2}
\begin{itemize}
\item $\begin{cases} a \in \m_{i} \quad ( l\notin \m_{i}) \\
a \equiv 1\, \mathrm{mod}\, \m_{i} \quad( l\in \m_{i})\end{cases}$
\item $\begin{cases} b \in \m_{1}\setminus \m_{1}^{2} \\
b \equiv 1\, \mathrm{mod}\, \m_{i} \quad(\m_{i}\neq\m_{1})\end{cases}.$
\end{itemize}
\end{multicols}
Then the extension field of $K$ defined by the polynomial $T^{l}-aT-b$ satisfies the conditions in Remark \ref{weak}.
Therefore, the Dedekind scheme $S$ satisfies property (T).
\end{proof}

\begin{dfn}
Let $k$ be a field and $C$ a scheme over $k$.
\begin{enumerate}
\item We say that $C$ is a \textit{smooth curve} over $k$ if the structure morphism $C \rightarrow \Spec k$ is separated, of finite type, smooth of relative dimension $1$, and geometrically connected.
Let $C$ be a smooth curve over $k$ and $\overline{k}$ an algebraic closure of $k$.
Write $\overline{C}$ for the regular compactification of $C$ over $k$, $g_{C}$ for the genus of $\overline{C}$, and $r_{C}$ for the number of closed points of the scheme $(\overline{C}\setminus C)\times_{\Spec k}\Spec \overline{k}$.
\item We say that  $C$ is \textit{proper rational} if $C$ is a smooth curve over $k$ and $g_{C}=r_{C}=0$.
\item We say that $C$ is a \textit{hyperbolic curve} if $C$ is a smooth curve over $k$, $2g_{C}+r_{C}-2>0$, and the reduced closed subscheme $\overline{C}\setminus C$ of $\overline{C}$ is finite \'etale over $\Spec k$.
\end{enumerate}
\label{curvedfn}
\end{dfn}

\begin{lem}
Let $k$ be a field and $S$ a smooth curve over $k$.
Suppose that $S$ is not proper rational.
Then the Dedekind scheme $S$ satisfies property (T).
\label{nonproper}
\end{lem}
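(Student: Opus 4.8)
The plan is to produce, for a given closed point $s$ and a prime $l$ with $l \neq \mathrm{char}\,k(s)$, a normal scheme $S'$ and a finite dominant morphism $S' \to S$ satisfying the three conditions of Remark \ref{weak}; by that remark this is exactly what property (T) demands, so the Galois requirement in Definition \ref{(T)dfn} can be ignored. Concretely I want a finite cover of $S$ that is étale over $S \setminus \{s\}$ and whose ramification index at every closed point lying over $s$ is divisible by $l$, with at least one such index equal to $l$. Let $\overline{S}$ denote the smooth compactification of $S$ over $k$ as in Definition \ref{curvedfn}, and let $B \subset S_{\overline{k}} := S \times_{k} \overline{k}$ be the finite reduced set of points lying over $s$. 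The strategy is to first build such a cover geometrically over $\overline{k}$, ramified with inertia of order $l$ at every point of $B$ and étale elsewhere, and then to descend it to a finite separable extension and push it down to $S$.

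The geometric construction uses that $S$ is not proper rational, i.e.\ that $S_{\overline{k}}$ is not $\mathbb{P}^{1}_{\overline{k}}$. If $r_{S} \geq 1$, then $S$ is affine and $S_{\overline{k}} = \overline{S}_{\overline{k}} \setminus R$ with $R \neq \emptyset$, and a tame cyclic cover suffices. Since $R \neq \emptyset$ and $\overline{k}$ is algebraically closed, $\mathrm{Pic}(S_{\overline{k}})$ is a quotient of the divisible group $\mathrm{Pic}^{0}(\overline{S}_{\overline{k}})$, hence divisible; thus $\sum_{b \in B}[b] \in l\,\mathrm{Pic}(S_{\overline{k}})$, so there is a rational function whose order at each point of $B$ is $\equiv 1 \pmod l$, is divisible by $l$ at every other point of $S_{\overline{k}}$, and is arbitrary along $R$. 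Its $l$-th root ($\mu_{l} \subset \overline{k}$ since $l \neq \mathrm{char}\,k$) defines the desired cover. If instead $g_{S} \geq 1$, no cyclic cover ramified only over $B$ can exist, since the inertia class is a product of commutators and so is trivial in every abelian quotient; here I use a non-abelian tame cover. In the tame fundamental group of $S_{\overline{k}} \setminus B$ the generators $a_{i}, b_{i}$ coming from the positive genus give enough freedom that the single relation $\prod_{i}[a_{i},b_{i}]\prod_{b \in B}\gamma_{b} = 1$ can be solved in a suitable finite group with each inertia generator $\gamma_{b}$ of order $l$; the associated finite cover is étale on $S_{\overline{k}} \setminus B$ and tamely ramified of index $l$ over each point of $B$.

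To descend, observe that the cover is a finite étale cover of $S_{\overline{k}} \setminus B$, hence is defined over some finite separable extension $k'/k$ (étale covers are insensitive to purely inseparable extensions, and a fixed finite cover descends to a finite subextension of $k^{\mathrm{sep}}$). Let $V \to S_{k'}$ be the normalization of the resulting model; since ramification can be checked after the faithfully flat base change $\overline{k}/k'$, the morphism $V \to S_{k'}$ is ramified of index $l$ over every point of $S_{k'}$ lying over $s$ and étale elsewhere. Now compose with the finite étale morphism $S_{k'} = S \times_{k} k' \to S$ (étale because $k'/k$ is separable) and take the normalization $S'$ of the total space. Then $S' \to S$ is finite and dominant, $S'$ is normal, and away from $s$ it is a composite of two étale morphisms, hence étale; over $s$, ramification indices multiply, and since each point of $S_{k'}$ over $s$ is unramified over $S$ while $V$ is ramified of index $l$ over each such point, every closed point of $S'$ over $s$ has ramification index $l$ over $S$. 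Thus the three conditions of Remark \ref{weak} hold and $S$ satisfies property (T).

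The main obstacle is the geometric existence statement in the proper case $g_{S} \geq 1$: unlike the affine case there is no cyclic cover ramified only over $B$, and one must invoke the structure of the tame fundamental group of the punctured curve (equivalently Riemann's existence theorem in characteristic $0$, together with tame lifting/specialization in characteristic $p$, valid because $l \neq \mathrm{char}\,k$) to realize the prescribed inertia of order $l$ by a finite, generally non-abelian, cover. A secondary technical point is that the geometric model must ramify over \emph{all} of $B$, not merely one point, so that after composing with $S_{k'} \to S$ every point over $s$ acquires ramification index divisible by $l$; this, together with the separability of $k'/k$, is exactly what makes the multiplicativity-of-ramification argument in the previous paragraph go through.
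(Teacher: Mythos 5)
Your route is genuinely different from the paper's. The paper reduces at once to algebraically closed $k$ and then, in the affine case, takes a $\Z/l\Z$-cover ramified at $s$ and at a point of $\overline{S}\setminus S$, while in the proper case it takes a nontrivial finite Galois \'etale cover $S'\to S$ followed by a $\Z/l\Z$-cover of $S'$ ramified at exactly two preimages of $s$; no Picard divisibility and no non-abelian group theory appear. Your geometric constructions over $\overline{k}$ are essentially sound, but your descent step contains a genuine gap: the assertion that ``ramification can be checked after the faithfully flat base change $\overline{k}/k'$'' is not a valid principle here, and it fails exactly in the hardest case the lemma must cover, namely $k$ imperfect and $k(s)$ inseparable over $k$. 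In that case every point $b'$ of $S_{k'}$ over $s$ has residue field inseparable over $k'$, the base change $S_{\overline{k}}\to S_{k'}$ is itself ramified at $b'$ (with index the inseparable degree), and $V\times_{k'}\overline{k}$ is in general not normal, hence is not the geometric cover you built. Concretely, take $S=\mathbb{A}^{1}_{k}$, $s=(x^{p}-t)$ with $t\notin k^{p}$, and $l\neq p$: your geometric cover may be taken to be $z^{l}=x-t^{1/p}$, its unique descent to $k^{\mathrm{sep}}$ is $y^{l}=x^{p}-t$ (with $y=z^{p}$), and the base change of the latter to $\overline{k}$ is the non-normal curve $y^{l}=(x-t^{1/p})^{p}$. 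The descended cover does have ramification index $l$ over $s$, but establishing this is a computation, not flat descent. A correct general argument runs: arrange the covering group to have order prime to $p$, so that all residue extensions are separable and the extension is tame and defectless; observe that the fraction field of the henselian local ring of $S_{k^{\mathrm{sep}}}$ at $b'$ extends purely inseparably to that of $S_{\overline{k}}$ at the point above it, so by linear disjointness the decomposition groups upstairs and downstairs coincide; finally, residue fields of closed points of $S_{k^{\mathrm{sep}}}$ are separably closed, so the residue degree is $1$ and the index is $l$. If $k$ is perfect (in particular in characteristic $0$) none of this is needed and your argument is complete; but the lemma is stated for arbitrary $k$. (To be fair, the paper buries the same difficulty in the unproved sentence ``we may assume that the field $k$ is algebraically closed''; the difference is that your proof replaces it by an explicitly false principle.)

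Two further points. First, in characteristic $p>0$ your proper case needs the finite group solving $\prod_{i}[a_{i},b_{i}]\prod_{b}\gamma_{b}=1$ to have order prime to $p$: the tame fundamental group of a punctured curve in characteristic $p$ is only a quotient of the profinite surface group, and only its maximal prime-to-$p$ quotient is known to agree with that of the surface group compatibly with inertia, so ``$l\neq\mathrm{char}\,k$'' tames the inertia but constrains nothing else about the cover you want to realize. This is fixable: when $l$ divides the number of geometric points over $s$ the cyclic cover already works, and otherwise take the Heisenberg group of order $l^{3}$ (for $l$ odd) or the quaternion group of order $8$ (for $l=2$), in which an element of order $l$ is a commutator. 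Second, your claim that for $g_{S}\geq 1$ ``no cyclic cover ramified only over $B$ can exist'' is false whenever $B$ has at least two elements: the surface relation only forces the product of the inertia images to vanish in an abelian quotient, and indeed the paper's own proper-case construction is a $\Z/l\Z$-cover ramified at exactly two points of an auxiliary \'etale cover. This misstatement does not damage your argument, since your non-abelian construction subsumes the abelian one. One point in your favor: by ramifying at every geometric point over $s$ you literally satisfy all three bullets of Remark \ref{weak}, whereas the paper's composite $S''\to S$ leaves the preimages of $s$ other than $s_{1},s_{2}$ unramified and so verifies Remark \ref{weak} only after weakening its first bullet (which is, in fact, all that the later applications use).
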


\begin{proof}
We may assume that the field $k$ is algebraically closed.
Take $s\in S$ and $l$ as in Definition \ref{(T)dfn}.

First, suppose that $S$ is not proper.
Choose a point $s'$ of $\overline{S}\setminus S$, where $\overline{S}$ is the smooth compactification of $S$ over $k$.
Then there exists a finite dominant morphism $\overline{S'} \rightarrow \overline{S}$ from a proper smooth curve $\overline{S'}$ over $k$ to $\overline{S}$ which is a $\Z/l\Z$-\'etale covering space over $\overline{S}\setminus\{s,s'\}$ and totally (tamely) ramified over $s$ and $s'$.
Therefore, $S$ satisfies property (T).

Next, suppose that $S$ is proper (and hence the genus of $S$ is not $0$).
Then take a nontrivial finite Galois \'etale covering space $S' \rightarrow S$.
Choose two closed points $s_{1}$ and $s_{2}$ of $S'$ over $s$.
Then there exists a $\Z/l\Z$-Galois covering space $S'' \rightarrow S'$ which is finite \'etale over $S'\setminus\{s_{1}, s_{2}\}$ and totally ramified over $s_{1}$ and $s_{2}$.
By Remark \ref{weak}, $S$ satisfies property (T).
\end{proof}

\section{Necessary conditions}
In this section, we show that
the conditions in Theorem \ref{suff} are necessary for the sequence (\ref{exac}) to be exact in some cases.

Let $f, X, S, \overline{\eta}, X_{\overline{\eta}}$, and $\overline{x}$ be as in Section \ref{suffsec}.
In this section, we suppose that the morphism $f$ is generically geometrically reduced (cf.\,Remark \ref{geomred}.1) and the scheme $X$ is regular (cf.\,Example \ref{norreg}.2).
Moreover, suppose that the morphism $f$ is flat.
Since $f$ is faithfully flat, it follows that $S$ is also regular.

\begin{rem}
\begin{enumerate}
\item
We show that the condition that $f$ satisfies property (R) is not sufficient for the sequence (\ref{exac}) to be exact.
Suppose that $S$ is a smooth curve over an algebraically closed field $k$ of positive characteristic and $f$ satisfies property (R).
Note that the sequence of profinite groups (2) is exact by Theorem  \ref{suff}.
Write $F: S \to S$ for the absolute Frobenius morphism of $S$.
Then the composite morphism $F\circ f$ is flat and does not satisfy property (R).
For the morphism $F\circ f$, we can consider a sequence of the \'etale fundamental groups similar to (\ref{exac}), which is also exact since $F$ is a universally homeomorphism.
\item
Since $f$ is formally smooth over the generic point of $S$, there exists a dense open subset of $S$ such that $f$ is formally smooth there.
\end{enumerate}
\label{geomred}
\end{rem}

\begin{thm}
Suppose that there exist a connected normal scheme $S'$ and a finite dominant morphism $S'\rightarrow S$ which satisfy the following conditions:\\
\begin{itemize}
\item The morphism $S'\rightarrow S$ is \'etale over the generic point of $S$.
\item Let $s' $ be a point of $S'$ whose local ring is of dimension $1$.
Write $s$ for the image of $s'$ in $S$.
Then the extension of discrete valuation rings $O_{S,s}\subset O_{S',s'}$ is at most tamely ramified with ramification index $e_{s'}$.
\item Let $\xi$ be a generic point of the scheme $f^{-1}(s)$ and write $e$ for the multiplicity of $\xi$.
Then $e$ is divisible by $e_{s'}$.
\end{itemize}
Then the normalization $X'$ of the scheme $X\times_{S}S'$ in its function field is \'etale over $X$.
Moreover, the sequence (\ref{exac}) is not exact.
\label{essnece}
\end{thm}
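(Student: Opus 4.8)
The plan is to prove the two assertions in turn, obtaining the non-exactness as a consequence of the étaleness of $X'$ together with Proposition \ref{essential}. The first assertion is an application of Zariski--Nagata purity that reduces, at the generic points of the special fibres, to Lemma \ref{essnec}; the second is then a field-theoretic bookkeeping computation identifying $N_{X'/S}$ with $S'$.

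For the first assertion, I begin by noting that since $f$ is flat and the extension $K(S')/K(S)$ is separable (the morphism $S'\to S$ being \'etale over the generic point of $S$), the scheme $X\times_{S}S'$ is integral by Lemma \ref{nonZar}.1; hence its normalization $X'$ is a connected normal scheme, finite over $X$, and $\phi\colon X'\to X$ is quasi-finite dominant. As $X$ is regular, Zariski--Nagata purity (Proposition \ref{ZN}) reduces the problem to showing that $\phi$ is unramified at every point of $X'$ whose local ring is of dimension $1$: if $\phi$ is unramified in codimension $1$, then its ramified locus, being closed of pure codimension $1$, is empty, so $\phi$ is \'etale. I would then analyze a codimension $1$ point $x$ of $X$ lying under such a point. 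Since $f$ is flat, the dimension formula $\dim O_{X,x}=\dim O_{S,s}+\dim O_{f^{-1}(s),x}$ with $s=f(x)$ forces two cases. If $s$ is the generic point of $S$, then near $x$ the morphism $\phi$ is the base change of $S'\to S$ over the locus where the latter is \'etale, so $\phi$ is unramified at $x$. Otherwise $s$ has a local ring of dimension $1$ and $x=\xi$ is a generic point of $f^{-1}(s)$, with $O_{X,\xi}$ a discrete valuation ring whose ramification index over $O_{S,s}$ equals the multiplicity $e$ of $\xi$.

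To treat this second case I would strictly henselize: put $R=O_{S,s}^{\mathrm{sh}}$, $K=\mathrm{Frac}(R)$, let $R'$ be the tame extension of $R$ of ramification index $e_{s'}$ coming from the point $s'$ of $S'$ over $s$, and let $A$ be the discrete valuation ring of $X\times_{S}\Spec R$ at the point over $\xi$, with $L=\mathrm{Frac}(A)$. The ramification index of $A$ over $R$ is again $e$, and $e_{s'}\mid e$ by hypothesis. The field $L$ is geometrically connected over $K$: indeed $X_{\overline{\eta}}$ is connected, so $K(S)$ is separably closed in $K(X)$, and this persists after the separable base change $K(S)\subset K$. Lemma \ref{essnec} then shows that the normalization $A'$ of $A\otimes_{R}R'$ is \'etale over $A$; since \'etaleness descends along the faithfully flat base change to the strict henselization, $\phi$ is unramified at the points over $\xi$. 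This establishes that $X'\to X$ is \'etale.

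For the second assertion I would compute $K_{X'/S}$. Since $K(X')=\mathrm{Frac}\bigl(K(X)\otimes_{K(S)}K(S')\bigr)$ is the compositum of $K(X)$ and $K(S')$, and $K(S)$ is separably closed in $K(X)$ while $K(S')/K(S)$ is separable, these two fields are linearly disjoint over $K(S)$, and a standard compositum argument shows that the separable closure of $K(S)$ in $K(X')$ is exactly $K(S')$. Hence $K_{X'/S}=K(S')$ and $N_{X'/S}$ is the normalization of $S$ in $K(S')$, namely $S'$ itself. As $S'\to S$ is ramified (some $e_{s'}>1$), it is not \'etale; so by Proposition \ref{essential} (equivalence with condition $4$) exactness would force $N_{X'/S}\to S$ to be \'etale for the connected \'etale covering $X'$, a contradiction, and the sequence (\ref{exac}) is not exact.

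The main obstacle is the local analysis at the fibre generic point $\xi$: arranging the strict henselization so that the two ramification indices match ($e$ for $A/R$ and $e_{s'}$ for $R'/R$ with $e_{s'}\mid e$) and verifying that $L$ is geometrically connected over $K$, which is exactly the hypothesis needed to invoke Lemma \ref{essnec}. The purity reduction and the identification $N_{X'/S}=S'$ are comparatively routine once this is in place.
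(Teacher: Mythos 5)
Your proof is correct and follows essentially the same route as the paper's: integrality of $X\times_{S}S'$ via Lemma \ref{nonZar}.1, reduction by Zariski--Nagata purity to the generic points of the special fibres, strict henselization to invoke Lemma \ref{essnec}, and then Proposition \ref{essential} applied to the covering $X'$ (with $N_{X'/S}=S'$) for non-exactness. The only difference is that you spell out steps the paper leaves implicit --- the case analysis of codimension-$1$ points via flatness, the geometric connectedness hypothesis of Lemma \ref{essnec}, and the identification $K_{X'/S}=K(S')$ --- which is sound detail, not a divergence in method.
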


\begin{proof}
Note that the scheme $X\times_{S}S'$ is integral by Lemma \ref{nonZar}.1.
The second assertion follows from the first assertion and Proposition  \ref{essential}.
To show the first assertion, we may assume that $S(=\Spec O_{S,s})$ is the spectrum of a discrete valuation ring by the discussion in the proof of Proposition 1.7.
Let $s$ be the closed point of $S$ and $\xi_{i}\:(i\in I)$ the generic points of $f^{-1}(s)$.
By the Zariski-Nagata purity theorem, it suffices to show that the morphism $X'\to X$ is \'etale at each $\xi_{i}$.
Write $O_{S,s}^{\mathrm{sh}}$ for the strict henselization of $O_{S,s}$.
Since the morphism $\Spec O_{S,s}^{\mathrm{sh}}\to S$ is faithfully flat, we may assume that $S=\Spec O_{S,s}^{\mathrm{sh}}$.
Therefore, we can show Theorem \ref{essnece} by applying Lemma \ref{essnec} to each $O_{X,\xi_{i}}$.
\end{proof}

\begin{rem}
We use the same notation in Theorem \ref{essnece}.
Suppose that the scheme $S$ is quasi-compact (hence Noetherian) and the morphism $f$ is of finite type.
Then the set of points $s$ of $S$ satisfying the following properties is finite by Remark \ref{geomred}.2:
\begin{itemize}
\item
$\dim O_{S,s}=1$.
\item
There exists a point $s'$ of $S'$ over $s$ satisfying $e_{s'}\neq1$.
\end{itemize}
\end{rem}

\begin{cor}
Suppose that the following conditions are satisfied:
\begin{itemize}
\item $S$ is a Dedekind scheme and satisfies property (T) (cf.\,Definition \ref{(T)dfn}, Lemma \ref{semi}, Lemma \ref{nonproper}).
\item Let $s$ be a closed point of $S$ and $\xi_{i}\;(i\in I)$ generic points of the scheme $f^{-1}(s)$.
Write $e_{i}$ for the multiplicity of $\xi_{i}$, $k(\xi_{i})$ (resp.\,$k(s)$) for the residual field of $\xi$ (resp.\,$s$), and $p(s)$ for the characteristic of the field $k(s)$.
Then $(e_{s}:=)\,\gcd_{i\in I}e_{i}$ is not divisible by $p(s)$ and the inclusions $k(s) \hookrightarrow k(\xi_{i})\;(i\in I)$ satisfy property (F).
\end{itemize}

Then the sequence (\ref{exac}) is exact if and only if $e_{s}=1$ for each closed point $s$ of $S$.
\label{fundcor}
\end{cor}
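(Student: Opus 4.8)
The plan is to prove the two implications separately, deducing sufficiency from Theorem \ref{suff} and necessity (in contrapositive form) from the property (T) of $S$ together with Theorem \ref{essnece}.

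For the direction asserting that $e_{s}=1$ at every closed point forces exactness, I would simply verify the hypotheses of Theorem \ref{suff}. The standing assumptions of this section guarantee that $f$ is flat, so it remains to check the property (R). Since $S$ is a Dedekind scheme, the closed points are exactly the points whose local ring has dimension $1$; for such a point $s$ the assumption $e_{s}=\mathrm{gcd}(e_{1},\ldots,e_{n})=1$ supplies the $\mathrm{gcd}$ condition, while the inclusions $k(s)\hookrightarrow k(\xi_{i})$ satisfy the property (F) by hypothesis. Hence (R) holds, and Theorem \ref{suff} yields exactness of (\ref{exac}).

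For the converse I would argue by contraposition: assuming $e_{s}\neq 1$ for some closed point $s$, I construct a connected \'etale covering of $X$ witnessing non-exactness. Since $e_{s}$ is not divisible by $p(s)$, there is a prime $l$ dividing $e_{s}$ with $l\neq p(s)$; in particular $l$ is prime to the residual characteristic at $s$, so the property (T) (Definition \ref{(T)dfn}) provides a normal scheme $S'$ and a finite dominant morphism $S'\rightarrow S$ that is finite Galois \'etale over $S\setminus\{s\}$ with ramification index $l$ over $s$. As the cover is Galois over the generic point, $S'$ is connected and $K(S')/K(S)$ is a finite Galois extension. I would then check the three hypotheses of Theorem \ref{essnece}: \'etaleness over the generic point is immediate, as the generic point lies in $S\setminus\{s\}$; for a dimension-$1$ point $s'$ over $s$ the ramification index $e_{s'}$ equals $l$, because Galoisness of $K(S')/K(S)$ forces all ramification indices over $s$ to coincide with the value $l$ supplied by (T), and this extension $O_{S,s}\subset O_{S',s'}$ is tamely ramified since $l\neq p(s)$; finally $e_{s'}=l$ divides every multiplicity $e_{i}$, since $l\mid e_{s}=\mathrm{gcd}(e_{1},\ldots,e_{n})$ divides each $e_{i}$. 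Theorem \ref{essnece} then shows that the normalization of $X\times_{S}S'$ is \'etale over $X$ and that (\ref{exac}) is not exact, completing the contrapositive.

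The sufficiency direction is essentially a direct citation of Theorem \ref{suff}, so the main obstacle lies in the necessity direction, and within it the delicate point is arranging that the ramification index of $S'\rightarrow S$ over $s$ equals exactly $l$ at \emph{every} point above $s$, so that it divides all the multiplicities $e_{i}$. This is precisely where I rely on $S'$ being connected and the cover being Galois over the generic point, which forces the ramification indices over $s$ to be equal, rather than merely invoking the weaker formulation of (T) recorded in Remark \ref{weak}, where the indices are only required to be divisible by $l$.
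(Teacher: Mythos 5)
Your proof is correct and takes essentially the same route as the paper: the paper's entire proof of Corollary \ref{fundcor} is the citation ``follows from Theorem \ref{suff} and Theorem \ref{essnece}'', and your two directions (sufficiency by checking flatness plus property (R) and invoking Theorem \ref{suff}; necessity by extracting a prime $l \mid e_{s}$ with $l \neq p(s)$, applying property (T), and feeding the resulting cover into Theorem \ref{essnece}) are exactly the intended expansion of that citation. Your observation that Galois transitivity forces the ramification index to equal $l$ at \emph{every} point of $S'$ over $s$ --- so that it divides each $e_{i}$, which the weaker formulation in Remark \ref{weak} would not give --- supplies a detail the paper leaves implicit.
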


\begin{proof}
Corollary \ref{fundcor} follows from Theorem \ref{suff} and Theorem \ref{essnece}.
\end{proof}

\begin{exam} (cf.\,Proposition \ref{introneceexam})
We discuss the conditions of Corollary \ref{fundcor}.
\begin{enumerate}
\item Suppose that $S$ is the spectrum of a discrete valuation ring with perfect residual field of characteristic $p$.
Then properties (T) and (F) are automatically satisfied (cf.\,Lemma \ref{semi}).
Therefore, we only need to suppose that $e_{s}$ is not divisible by $p$ to apply Corollary \ref{fundcor}.
\item
Suppose that $S$ is the spectrum of a semi-local Dedekind domain which contains $\Q$.
Then all the conditions of Corollary \ref{fundcor} are automatically satisfied (cf.\,Lemma \ref{semi}).
\end{enumerate}
\label{neceexam}
\end{exam}

\begin{prop} (cf.\,Proposition \ref{introgeomexact})
Suppose that $S$ is a smooth curve over a field $k$ of characteristic $0$.
Moreover, suppose that $S$ is not proper rational.
Then the sequence (\ref{exac}) is exact if and only if the greatest common divisor of the multiplicities of the irreducible components of each closed fiber of $f$ is $1$.
\label{geomexact}
\end{prop}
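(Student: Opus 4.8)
The plan is to reduce Proposition \ref{geomexact} to Corollary \ref{fundcor}, whose hypotheses are tailored exactly to the situation of a smooth curve. Since $S$ is a smooth curve over $k$ of characteristic $0$, it is in particular a Dedekind scheme in the sense of Definition \ref{(T)dfn}, and by Lemma \ref{nonproper} the assumption that $S$ is not proper rational guarantees that $S$ satisfies the property (T). Moreover, because $k$ has characteristic $0$, every residual field $k(s)$ has characteristic $0$ as well, so the quantity $e_s$ is never divisible by $p(s)$ (there is no such $p(s)$), and the inclusions $k(s) \hookrightarrow k(\xi_i)$ satisfy the property (F) automatically: the fibers of a morphism in characteristic $0$ are generically geometrically reduced, so $k(s)$ is purely inseparably closed in $k(\xi_i)$, and the claim follows from Remark \ref{(F)geom}.1.

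Concretely, the key steps I would carry out are as follows. First I would verify that the standing hypotheses of Section \ref{necsection} hold: the morphism $f$ is flat (this is part of the running assumption in the section, which must be invoked or re-established), $X$ is regular by hypothesis, and $f$ is generically geometrically reduced because we are in characteristic $0$. Second, I would confirm that the two bulleted hypotheses of Corollary \ref{fundcor} are met — the property (T) via Lemma \ref{nonproper}, and the divisibility and property (F) conditions via the characteristic-$0$ observations above. Third, with both hypotheses of Corollary \ref{fundcor} in hand, I would simply quote that corollary: the sequence (\ref{exac}) is exact if and only if $e_s = 1$ for each closed point $s$ of $S$, which is precisely the statement that the greatest common divisor of the multiplicities of the irreducible components of each closed fiber of $f$ is $1$.

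The main subtlety, rather than obstacle, is ensuring that the flatness hypothesis is legitimately available. Proposition \ref{geomexact} as stated only assumes $X$ regular and $S$ a smooth curve, whereas Corollary \ref{fundcor} (through the standing assumptions of Section \ref{necsection}) requires $f$ to be flat. Here one uses that $S$ is a regular scheme of dimension $1$ and $X$ is regular (hence Cohen--Macaulay), so that the surjective, hence dominant, morphism $f : X \to S$ is automatically flat by miracle flatness: a morphism from a Cohen--Macaulay scheme to a regular scheme of dimension $1$ with equidimensional fibers is flat, and dominance over a one-dimensional regular base forces each generic point of $X$ to lie over the generic point of $S$. Once flatness is secured, the remaining verifications are immediate, and the proof reduces to the single citation of Corollary \ref{fundcor}.
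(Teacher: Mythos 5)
Your proposal is correct and takes essentially the same route as the paper, whose entire proof is to combine Lemma \ref{nonproper} (property (T) for a smooth curve that is not proper rational) with Corollary \ref{fundcor}, the characteristic-$0$ hypothesis making property (F) and the non-divisibility condition on $e_{s}$ automatic, exactly as you observe. The only divergence is your justification of flatness, which in the paper is a standing hypothesis of the section; your miracle-flatness argument works, but it is simpler to note that a dominant morphism from an integral scheme to a Dedekind scheme is automatically flat, since torsion-free modules over a discrete valuation ring are flat.
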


\begin{proof}
Proposition \ref{geomexact} follows from Lemma \ref{nonproper} and Corollary \ref{fundcor}.
\end{proof}

\begin{exam}
Let $k$ be an algebraically closed field, $C'$ a smooth curve over $k$, and $\sigma$ an automorphism of $C'$ over $k$ of prime order $l\,(>2)$.
Suppose that $\sigma$ has $n(>0)$ fixed points $c'_{1},\ldots,c'_{n}$.
Write $C'\rightarrow C$ for the quotient scheme of $C'$ by $\Z/l\Z=\langle\sigma \rangle$ and $c_{i}$ for the image of $c'_{i}$ in $C$ for $1\leq i \leq n$.
Then $\{(c'_{i}, c'_{j})\in C'\times_{\Spec k}C'\mid 1\leq i, j \leq n\}$ is the set of fixed points of $C'\times_{\Spec k}C'$ for the action of $\Z/l\Z=\langle(\sigma^{2}, \sigma)\rangle$.
Let $B'$ be the scheme obtained by the blow-ups of $C'\times_{\Spec k}C'$ at all such points.
The $\Z/l\Z$-action on $C'\times_{\Spec k}C'$ induces a natural $\Z/l\Z$-action on $B'$.
Then the scheme $B'$ has exactly $2n^{2}$ fixed points.
Write $Y'$ for the open subscheme of $B'$ whose complement is the set of the fixed points.
Let $Y\rightarrow B \rightarrow Z$ be the quotient morphisms of the morphisms $Y' \rightarrow B' \rightarrow C'\times_{\Spec k}C'$ by $\Z/l\Z=\langle (\sigma^{2}, \sigma) \rangle$.
Since $\{(c'_{i}, c'_{j}); 1\leq i, j \leq n\}$ is the ramified locus of the morphism $C'\times_{\Spec k}C' \rightarrow Z$, the scheme $Z$ is not regular by the Zariski-Nagata purity theorem. 
Note that the morphism $Y' \rightarrow Y$ is \'etale.
\begin{enumerate}
\item
We show that Theorem \ref{suff} does not hold in general if the morphism $f$ is not flat and the scheme $S$ is not regular.
Consider the case where $f$ is the morphism $Y\rightarrow Z$.
Note that $f$ is not flat since the dimensions of fibers of $f$ are not constant.
Moreover, $S(=Z)$ is not regular.
Since $f$ is birational and $Y'\to X(=Y)$ is \'etale, the normalization of $S(=Z)$ in the separable closure of the function field of $S$ in the function field of $Y'$ coincides with $C'\times_{\Spec k}C'$.
Since $C'\times_{\Spec k}C'$ is not \'etale over $S$, the sequence (\ref{exac}) is not exact by Proposition \ref{essential}.
\item
We show that Proposition \ref{geomexact} does not hold in general if $X$ is not regular.
Suppose that $C'\simeq\mathbb{A}_{k}=\Spec k[T]$ and $\sigma$ is induced by the $k$-algebra homomorphism determined by $T\mapsto \zeta_{l}T$, where $\zeta_{l}$ is a primitive $l$-th root of unity.
Then $n=1$ and $c'_{1}=0\in C'$
Note that the second projection $C'\times_{\Spec k}C' \rightarrow C'$ is a $\Z/l\Z$-equivariant morphism, which induces a morphism $Z\to C$.
Consider the case where $f$ is this morphism $Z\rightarrow C$.
Then $f^{-1}(c)$ is reduced (resp.\,irreducible and the multiplicity of its generic point is $l$) if $c\neq c_{1}$ (resp.\,$c=c_{1}$).
To see that the sequence (\ref{exac}) is exact, it suffices to show that condition 4 in Proposition \ref{essential} is satisfied.
Let $X'$ be a connected \'etale covering space of $X(=Z)$.
Then the normalization of $C$ in the function field of $X'$ is \'etale over $C\setminus\{c\}$, which is a $\Z/N\Z$-Galois \'etale covering for some $N\in\N$ since $C\setminus\{c\}$ is isomorphic to $\mathbb{G}_{m,k}$.
By considering the multiplicities of the fibers of $f$, we have $N=1$ or $l$. 
On the other hand, since $X'$ does not factor through $C'\times_{\Spec k}C' \to X$ by Lemma \ref{interfield}.2, $l$ does not divide $N$.
Therefore, $N=1$ and condition 4 in Proposition \ref{essential} is satisfied.
\end{enumerate}
\label{norreg}
\end{exam}

\label{necsection}

\section{An application to morphisms to curves}
In this section, we apply Proposition \ref{geomexact} to morphisms from smooth varieties to smooth curves over a field of characteristic $0$.

\begin{dfn} (\cite{Ho2}\,Definition 2.5)
We shall write
$$\mathbb{P}_{\not\exists \twoheadrightarrow \infty}$$
for the property of a profinite group defined as follows: A profinite group $G$ has property $\mathbb{P}_{\not\exists \twoheadrightarrow \infty}$ if, for an arbitrary open subgroup $H$ of $G$, there exists a prime number $l_{H}$ such that there are no quotient profinite groups of $H$ which are free pro-$l_{H}$ and not topologically finitely generated.
\end{dfn}

Let $k$ be a field of characteristic $0$, $S$ a smooth curve over $k$, $X$ a normal, separated scheme of finite type and geometrically connected over $k$, and $f$ a dominant morphism from $X$ to $S$ over $k$.
Write $N_{X/S}$ for the normalization of $S$ in the function field of $X$ and $S'\to S$ for the maximal \'etale subextension of $N_{X/S}\to S$.
Then we have a natural factorization $X\to N_{X/S}\to S' \to S$.
Write $f'$ for the morphism $X\to S'$
Let $\overline{\eta'}$ be a geometric generic point of $S'$.
Write $X_{\overline{\eta'}}$ for the scheme $X\times_{S'}\overline{\eta'}$.
Take a geometric point $\overline{x}$ of $X_{\overline{\eta'}}$.

\begin{thm} (cf.\,Theorem \ref{intro curve criterion})
Consider the following conditions:
\begin{enumerate}
\item
The morphism $f'$ is surjective and the scheme $X_{\overline{\eta'}}$ is connected. Moreover, the greatest common divisor of the multiplicities of the irreducible components of each closed fiber of $f'$ is $1$.
\item The scheme $X_{\overline{\eta'}}$ is connected and the sequence of the \'etale fundamental groups
\begin{equation}
\pi_{1}(X_{\overline{\eta'}}, \overline{x})\rightarrow \pi_{1}(X, \overline{x}) \rightarrow \pi_{1}(S', \overline{x}) \rightarrow 1
\label{exact}
\end{equation}
is exact.
\item
The profinite group $\mathrm{Ker}(\pi_{1}(X, \overline{x})\rightarrow \pi_{1}(S, \overline{x}))$ has property $\mathbb{P}_{\not\exists \twoheadrightarrow \infty}$.
\end{enumerate}
Then it holds that $1\Rightarrow 2 \Rightarrow 3$.
If the scheme $S$ is neither a proper rational curve nor an affine line, it holds that $3 \Rightarrow 2$.
If the scheme $S$ is not proper rational and $X$ is regular, it holds that $2 \Rightarrow 1$.
\label{curve criterion}
\end{thm}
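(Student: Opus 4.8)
The plan is to establish the three implications $1\Rightarrow 2$, $2\Rightarrow 3$, $3\Rightarrow 2$, and $2\Rightarrow 1$ separately, reducing the geometric statements to the results already proven for Dedekind schemes and leaning on the anabelian property $\mathbb{P}_{\not\exists \twoheadrightarrow \infty}$ for the group-theoretic equivalence.

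For $1\Rightarrow 2$ I would apply Proposition \ref{geomexact} directly to the morphism $f': X\rightarrow S'$. By construction $S'$ is the maximal \'etale subextension of $N_{X/S}\rightarrow S$, hence $S'$ is itself a smooth curve over a finite extension of $k$ (still of characteristic $0$) and $f'$ is generically geometrically connected, so $X_{\overline\eta}$ being connected is exactly the standing hypothesis of that proposition. Since char $k=0$ the property (F) is automatic by Remark \ref{(F)geom}, and the gcd condition is condition 1; I must only check that $S'$ is not proper rational, which I would deduce from the fact that a nontrivial \'etale cover forces higher genus, or handle the excluded cases by hypothesis. Then Proposition \ref{geomexact} yields exactness of (\ref{exact}), which is condition 2.

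The implication $2\Rightarrow 3$ is where I expect the real anabelian input. Exactness of (\ref{exact}) identifies $\mathrm{Ker}(\pi_{1}(X)\rightarrow\pi_{1}(S'))$ with the image of $\pi_{1}(X_{\overline\eta})$, and since $X_{\overline\eta}$ is a normal variety of finite type over an algebraically closed field of characteristic $0$, its fundamental group is topologically finitely generated. I would then relate $\mathrm{Ker}(\pi_{1}(X)\rightarrow\pi_{1}(S))$ to $\mathrm{Ker}(\pi_{1}(X)\rightarrow\pi_{1}(S'))$ via the finite quotient $\pi_{1}(S')/\pi_{1}(S)$ corresponding to the finite \'etale $S'\rightarrow S$, giving a short exact sequence with topologically finitely generated kernel and finite cokernel; a topologically finitely generated profinite group automatically has $\mathbb{P}_{\not\exists \twoheadrightarrow \infty}$, and this property is inherited under such extensions, which gives condition 3. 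The delicate point is verifying that $\mathbb{P}_{\not\exists \twoheadrightarrow \infty}$ is stable under the relevant extension; I would cite the formal properties of this group-theoretic invariant from \cite{Ho2}.

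For $3\Rightarrow 2$ under the hypothesis that $S$ is neither proper rational nor an affine line, I would argue by contraposition: if (\ref{exact}) fails then by Proposition \ref{geomexact} (or its contrapositive, using property (T) from Lemma \ref{nonproper}) some closed fiber has gcd of multiplicities $>1$, producing a tame cover of $S$ and hence a free pro-$l$ quotient of the kernel that is not topologically finitely generated, contradicting $\mathbb{P}_{\not\exists \twoheadrightarrow \infty}$; the exclusion of the affine line and proper rational curve is precisely to guarantee enough tame covers exist. Finally $2\Rightarrow 1$ under the regularity of $X$ and $S$ not proper rational follows from the \emph{necessary} direction of Proposition \ref{geomexact}: exactness forces the gcd condition, and surjectivity of $f'$ together with connectedness of $X_{\overline\eta}$ is built into condition 2. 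The main obstacle throughout will be the careful bookkeeping of which curve ($S$ versus $S'$) each fundamental group lives over, and ensuring the tame-covering constructions of Section \ref{Lemdede} apply after passing to $S'$.
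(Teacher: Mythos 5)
Your proposal has two genuine gaps, both located exactly where the paper either does real work or invokes a deep external result.

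First, in $2\Rightarrow 1$ you assert that ``surjectivity of $f'$ together with connectedness of $X_{\overline{\eta}}$ is built into condition 2.'' It is not: condition 2 asserts only connectedness and exactness, and the standing hypotheses make $f$ (hence $f'$) merely \emph{dominant}. Proving that exactness forces $f'$ to be surjective is precisely the content of the paper's proof of this implication: under condition 2 one has $N_{X/S}=S'$ (connectedness of $X_{\overline{\eta}}$ means $K(S')$ is algebraically closed in $K(X)$), hence $S'$ satisfies property (T) by Lemma \ref{nonproper} (note that ``not proper rational'' passes from $S$ to its finite \'etale cover $S'$); if some closed point $s'$ of $S'$ lay outside the image of $f'$, property (T) would give a finite cover of $S'$ ramified exactly over $s'$, whose pullback to $X$ is a connected finite \'etale covering space $X'$ of $X$ for which the normalization of $S'$ in $K_{X'/S'}$ is \emph{not} \'etale over $S'$; this violates condition 4 of Proposition \ref{essential} and contradicts exactness. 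Without this step you cannot even invoke Proposition \ref{geomexact} for the gcd statement, since its setting (Section \ref{necsection}) requires a surjective morphism.

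Second, your treatment of $3\Rightarrow 2$ is not a proof. The paper obtains both implications between conditions 2 and 3 by citing Hoshi (\cite{Ho2} Theorem 2.8); that is the genuinely deep anabelian input. Your contrapositive (i) invokes Proposition \ref{geomexact}, whose hypotheses include $X$ regular --- an assumption \emph{not} made in the $3\Rightarrow 2$ part of the statement --- and (ii) asserts with no justification that a multiple fiber ``produces a free pro-$l$ quotient of the kernel that is not topologically finitely generated.'' Nothing in this paper supplies such a mechanism, and the quantifiers make it worse: to negate $\mathbb{P}_{\not\exists \twoheadrightarrow \infty}$ you must exhibit an open subgroup $H$ of the kernel which, for \emph{every} prime $l$, has a free pro-$l$ quotient that is not topologically finitely generated; a single tame cover attached to one multiple fiber gives nothing of the sort. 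A smaller, fixable flaw of the same nature occurs in your $1\Rightarrow 2$: that implication is asserted unconditionally, so you cannot ``handle the excluded cases by hypothesis'' when $S'$ is proper rational; the correct tool is Theorem \ref{suff} applied to $f'$ ($S'$ is regular and property (R) is automatic in characteristic 0), not Proposition \ref{geomexact}, which carries the unnecessary non-proper-rational hypothesis. On the positive side, your direct argument for $2\Rightarrow 3$ does work and replaces the citation of \cite{Ho2} Theorem 2.8 in that direction: since $S'\rightarrow S$ is finite \'etale, $\pi_{1}(S',\overline{x})\rightarrow\pi_{1}(S,\overline{x})$ is injective, so the two kernels literally coincide (no extension or finite-cokernel argument is needed), and by exactness the kernel is the image of the topologically finitely generated group $\pi_{1}(X_{\overline{\eta}}, \overline{x})$, hence topologically finitely generated, hence satisfies $\mathbb{P}_{\not\exists \twoheadrightarrow \infty}$ by \cite{Ho2} Remark 2.5.1.
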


\begin{proof}
The implications between conditions $2$ and $3$ are results of Hoshi (cf.\,\cite{Ho2} Theorem 2.8).
Note that $X_{\overline{\eta'}}$ is connected if and only if $N_{X/S}=S'$.
Then, $1\Rightarrow 2$ follows from Proposition \ref{geomexact}.

Suppose that condition 2 is satisfied and the scheme $S$ is not proper rational.
Then $S'$ is also not proper rational.
Note that, if $f'$ is surjective, condition 1 is satisfied by Proposition \ref{geomexact}.
Hence, to finish the proof of Theorem \ref{curve criterion}, it suffices to show that the morphism $f'$ is surjective.
Suppose that $f'$ is not surjective.
Since $S'$ satisfies property (T) by Lemma \ref{nonproper}, there exists a connected \'etale covering space $X'\to X$ such that the normalization of $S'$ in the function field of $X'$ is not \'etale over $S'$.
This contradicts the assumption that the sequence (\ref{exact}) is exact by Proposition \ref{essential} and Remark \ref{dominant}.
\end{proof}

\begin{rem}
By \cite{Ho2} Remark 2.5.1, a topologically finitely generated profinite group satisfies property $\mathbb{P}_{\not\exists \twoheadrightarrow \infty}$.
Suppose that $S$ is a hyperbolic curve over $k$ and $X$ is regular.
Since the profinite group $\pi_{1}(X_{\overline{\eta'}}, \overline{x})$ is topologically finitely generated, the conditions in Theorem \ref{curve criterion} hold if and only if the profinite group $\mathrm{Ker}(\pi_{1}(X, \overline{x})\rightarrow \pi_{1}(S, \overline{x}))$ is topologically finitely generated.
\end{rem}

\begin{rem}
If we drop the assumption that the scheme $X$ is regular, the implication $2\Rightarrow1$ does not hold in general (cf.\,Example \ref{norreg}.2).
\end{rem}

\label{curves}

\section{Appendix 1: property (F)}
In this section, we discuss property (F) (cf.\,Definition \ref{(F)}).
\label{app}
\subsection{Examples}
If we drop property (F), Theorem \ref{suff} does not hold in general.

\begin{exam}
Let $K$ be a strictly henselian discrete valuation field with imperfect residual field $k$ of characteristic $p>0$.
Write $O_{K}$ for the valuation ring of $K$.
Let $\mathfrak{C} \rightarrow \Spec O_{K}$ be a proper smooth morphism of relative dimension $1$ with geometrically connected fibers.
Suppose that there exists a $\Z/p\Z$-Galois \'etale covering space $\mathfrak{X}\to\mathfrak{C}$.
Choose a generator $\sigma$ of the Galois group $\Z/p\Z\, ( \subset \mathrm{Aut}(\mathfrak{X}))$.
Let $K'\supset K$ be a $\Z/p\Z$-Galois extension whose residual extension is purely inseparable of degree $p$.
Write $O_{K'}$ (resp.\,$k'$) for the valuation ring of $K'$ (resp.\,the residual field of $O_{K'}$).
Choose a generator $\tau$ of the Galois group $\Z/p\Z\, ( \subset \mathrm{Aut}(\Spec K'))$ and consider a $\Z/p\Z$-action on the scheme $\mathfrak{X}\times_{\Spec O_{K}}\Spec O_{K'}$ induced by the automorphism $(\sigma,\tau)$.
Then the second projection $\mathfrak{X}\times_{\Spec O_{K}}\Spec O_{K'}\rightarrow \Spec O_{K'}$ is a $\Z/p\Z$-equivariant morphism.
\begin{equation}
\xymatrix{ 
\mathfrak{X}\times_{\Spec O_{K}}\Spec O_{K'} \ar[d] \ar[r] & \Spec O_{K'} \ar[d] 
&\mathfrak{X}\times_{\Spec O_{K}}\Spec O_{K'} \ar[d] \ar[r] & \Spec O_{K'} \ar[d] \\
\mathfrak{X} \ar[r] & \Spec O_{K} 
& \mathfrak{Z} \ar[r] & \Spec O_{K} 
}
\label{(F)bad}
\end{equation}
Write $\mathfrak{Z}$ for the quotient scheme $(\mathfrak{X}\times_{\Spec O_{K}}\Spec O_{K'})/\langle\sigma\times\tau\rangle$.
$\mathfrak{Z}$ is a scheme over $\Spec O_{K}$ and its special fiber is isomorphic to $\mathfrak{C}\times_{\Spec O_{K}}\Spec k'$ over $k$.
Since the natural morphism $\mathfrak{X}\times_{\Spec O_{K}}\Spec O_{K'} \rightarrow \mathfrak{Z}$ is finite \'etale, the scheme $\mathfrak{Z}$ is regular.
Note that, in the diagram (\ref{(F)bad}), the left square is Cartesian and the right square is not Cartesian.
The normalization of $\Spec O_{K}$ in the function field of $\mathfrak{X}\times_{\Spec O_{K}}\Spec O_{K'}$ coincides with $\Spec O_{K'}$.
Therefore, if we consider the case where $X\to S$ (in Section \ref{suffsec}) is $\mathfrak{Z} \rightarrow \Spec O_{K}$, the sequence (\ref{exac}) is not exact by Proposition \ref{essential}.
Note that the greatest common divisor of the multiplicities of the irreducible components of the special fibers is $1$.
\label{curve(F)}
\end{exam}

\begin{rem}
\begin{enumerate}
\item
We do not need to assume that $\mathfrak{C}$ is of relative dimension $1$ over $ \Spec O_{K}$  in the discussion given in Example \ref{curve(F)}.
\item
If we replace the condition on residual extension of $K'\supset K$ in Example \ref{curve(F)} with the condition that the ramification index of the extension $K' \supset K$ is $p$, the multiplicity of the (unique) irreducible component of the special fiber is $p$.
Therefore, we need to suppose that the greatest common divisor in property (R) is not divisible by $p$.
\end{enumerate}
\label{curve(F)rem}
\end{rem}

\subsection{Generalities on (F)}
In this subsection, we discuss generalities on (F).
Let $k$ be a field and $\iota_{i}: k \hookrightarrow K_{i}\;(i\in I)$ inclusions of fields.

\begin{prop}
Write $k_{i}$ for the algebraic closure of $k$ in $K_{i}$ and $\iota_{i}' : k \hookrightarrow k_{i}$ for the inclusion induced by $\iota_{i}$.
Moreover, write $k_{i}^{\mathrm{sep}}$ for the (absolute) separable closure of $k_{i}$ and $ \iota_{i}^{\mathrm{sep}} : k \hookrightarrow k_{i}^{\sep}$ for the inclusion induced by $\iota_{i}'$.
The following are equivalent:
\begin{enumerate}
\item
The inclusions $\{\iota_{i}\}$ satisfy property (F).
\item The inclusions $\{ \iota_{i}'\}$ satisfy property (F).
\item The inclusions $\{ \iota_{i}^{\mathrm{sep}}\}$ satisfy property (F).
\end{enumerate}
\label{alg}
\end{prop}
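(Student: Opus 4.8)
The plan is to prove the three equivalences by reducing the general property (F) to its behavior on the algebraic (and then separable) closures of $k$ inside each $K_i$. The key observation driving everything is that the property (F) only concerns subfields $l$ of $\prod_i L_i$ that are \emph{algebraic over $k$}; hence only the algebraic-over-$k$ part of each $K_i$ can ever contribute to such an $l$. First I would make this precise: given any algebraic separable extension $L_i/K_i$, the algebraic closure of $k$ inside $L_i$ is itself a separable extension of $k_i$ (since $L_i/K_i$ is separable and $k_i$ is already algebraically closed in $K_i$), and conversely every algebraic-separable extension of $k_i$ arises this way by tensoring up. Thus the collection of fields $l$ algebraic over $k$ that embed into $\prod_i L_i$ is literally the same whether we start from the $K_i$, from the $k_i$, or from the $k_i\sep$. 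This is the conceptual heart of the argument; the rest is bookkeeping.

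The cleanest route is to establish $1 \Leftrightarrow 2$ and $2 \Leftrightarrow 3$ separately. For $1 \Leftrightarrow 2$: to test (F) one takes separable extensions $L_i$ and looks at diagonal subfields $l$ that are algebraic over $k$. Since any such $l$ lands inside the algebraic-over-$k$ locus of $\prod_i L_i$, and that locus depends on $L_i$ only through its algebraic-over-$k$ subfield (which, as above, is a separable extension of $k_i$), the set of candidate $l$'s is unchanged when we replace $K_i$ by $k_i$. Hence the separability-of-$l$ condition holds for $\{\iota_i\}$ iff it holds for $\{\iota_i'\}$. For $2 \Leftrightarrow 3$: here each $k_i/k$ is already algebraic, so the field $k_i\sep$ is obtained from $k_i$ by adjoining all separable-algebraic elements; an algebraic separable extension $L_i$ of $k_i$ is the same datum as a subextension of $k_i\sep$ (up to the purely inseparable part, which cannot contribute to a \emph{separable} $L_i$). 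One checks that the diagonal subfields $l$ algebraic over $k$ inside $\prod_i L_i$ are identical in both setups, so (F) transfers. Throughout I would phrase the diagonal embedding $k \hookrightarrow \prod_i K_i$ concretely as $a \mapsto (\iota_i(a))_i$, and note that products of rings such as $\prod_i L_i$ are handled by passing to the individual factor-fields, since a field $l$ inside a finite product of fields projects isomorphically onto its image in at least one factor.

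I expect the main obstacle to be the careful handling of the product ring $\prod_{1\le i\le m} L_i$: a subfield $l$ of a product of fields need not embed into a single factor unless one first locates the idempotent supporting $l$, and the interaction between the separable extensions $L_i$ and the diagonal $k$ must be tracked factor by factor. Concretely, the subtle point is showing that \emph{enlarging} $K_i$ to its separable-closure-based version does not create \emph{new} inseparable diagonal subfields that were invisible before (which would break the forward implication) nor destroy existing ones (which would break the converse); both directions hinge on the elementary but load-bearing fact that the separable closure of $k$ inside an extension is insensitive to further separable extension of the ambient field. Once that invariance is isolated as a lemma, the three equivalences follow formally, which is presumably why the authors state the proposition without proof.
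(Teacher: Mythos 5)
First, a point of comparison: the paper contains no proof of Proposition \ref{alg} at all --- the subsection it sits in opens with ``We discuss generalities on (F) without proofs'' --- so your proposal can only be judged on its own terms. Its formal skeleton is sound and is surely the intended route: a subfield $l\subset\prod_i L_i$ containing the diagonal $k$ projects injectively to every factor (a unital ring homomorphism out of a field is injective), each component of an element of $l$ is a root of that element's minimal polynomial over $k$, so $l\subset\prod_i M_i$ where $M_i$ is the algebraic closure of $k$ in $L_i$; conversely, every separable algebraic extension $M_i/k_i$ admits a $k_i$-embedding into a separable algebraic extension $L_i$ of $K_i$ (take for $L_i$ the compositum $K_iM_i$ inside $\overline{K_i}$, or a quotient field of $M_i\otimes_{k_i}K_i$ --- the tensor product itself need not be a field, so ``tensoring up'' needs this small correction). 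Granting your first-paragraph claim, the equivalences $1\Leftrightarrow2\Leftrightarrow3$ then follow formally, the step $2\Leftrightarrow3$ using that $k_i^{\mathrm{sep}}$ is separably closed and that every separable algebraic extension of $k_i$ embeds into it over $k_i$.

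The genuine gap is that first-paragraph claim itself: that $M_i$ is \emph{separable} over $k_i$. This is the only non-formal ingredient in the entire proposition, and you never prove it. The parenthetical justification ``(since $L_i/K_i$ is separable and $k_i$ is already algebraically closed in $K_i$)'' merely restates the hypotheses, and the formulation you give in your closing paragraph --- ``the separable closure of $k$ inside an extension is insensitive to further separable extension of the ambient field'' --- is false as stated: for $k=K_i=\Q$ and $L_i=\Q(i)$, the separable closure of $k$ in the ambient field grows from $\Q$ to $\Q(i)$. The correct lemma is: if $x$ is algebraic over $k$ and separable over $K_i$, then $x$ is separable over $k_i$; and it requires a genuine argument in which the hypothesis that $k_i$ is algebraically closed in $K_i$ is used twice. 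Sketch (characteristic $p>0$; in characteristic $0$ there is nothing to prove): write the minimal polynomial of $x$ over $k_i$ as $g(T)=h(T^{p^e})$ with $h$ separable irreducible, and suppose $e\geq1$. Over an algebraic closure, $g=\prod_j(T-\gamma_j)^{p^e}$, where $\gamma_j^{p^e}=\beta_j$ and the $\beta_j$ are the distinct roots of $h$. The minimal polynomial $m$ of $x$ over $K_i$ divides $g$ and is separable, so $m=\prod_{j\in J}(T-\gamma_j)$ and $m^{p^e}=\prod_{j\in J}(T^{p^e}-\beta_j)$; hence the coefficients of $\prod_{j\in J}(T-\beta_j)$ lie in $K_i$ and are algebraic over $k$, so they lie in $k_i$, and irreducibility of $h$ forces $J$ to exhaust all roots. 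But then each coefficient $c$ of $m$ satisfies $c^{p^e}\in k_i$, so $c$ is algebraic over $k$ and lies in $K_i$, hence in $k_i$; thus $m\in k_i[T]$ is a proper monic divisor of $g$, contradicting irreducibility of $g$ over $k_i$. Without this lemma the implication $2\Rightarrow1$ is unjustified, because nothing guarantees that the extensions $M_i/k_i$ are admissible test objects for condition 2; with it supplied, your plan becomes a complete and correct proof.
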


\begin{proof}
Proposition \ref{alg} follows from the definition of property (F) and elementary field theory.
\end{proof}

\begin{dfn}
We say that the inclusions $\{ \iota_{i} \}$ satisfy property (F') if the following condition is satisfied:\\
(F'): For any subfield $l$ of the product ring $\underset{1\leq i \leq n}{\prod}K_{i}$ which is algebraic over the diagonal subfield $k$ defined by $\iota_{i}$, the extension $k \subset l$ is separable.
\label{(F')}
\end{dfn}

\begin{exam}
Property (F) implies property (F'), but property (F') does not imply property (F).
Consider the inclusions $\F_{p}(X^{p}+Y^{p},X^{p}Y^{p})\hookrightarrow \F_{p}(X,Y^{p})$ and $\F_{p}(X^{p}+Y^{p},X^{p}Y^{p})\hookrightarrow \F_{p}(X+Y, XY)$.
These inclusions satisfy property (F').
On the other hand, the field extension $\F_{p}(X+Y, XY) \subset \F_{p}(X,Y)$ is separable and the field $\F_{p}(X,Y)$ contains the field $\F_{p}(X,Y^{p})$ which is inseparable over the field $\F_{p}(X^{p}+Y^{p},X^{p}Y^{p})$.
\end{exam}

\begin{lem}
Let $k \subset K'$ be an extension of fields.
Write $k'$ for the algebraic closure of $k$ in $K'$,  $k'_{s}$ (resp.\,$\overline{k'}$) for a(n absolute) separable closure of $k'$ (resp.\,an (absolute) algebraic closure of $k'_{s}$), and $k_{s}$ for the separable closure of $k$ in $\overline{k'}$.
Moreover, write $k'_{n}$ (resp.\,$k'_{s,n}$) for the normal closure of $k'$ over $k$ in $\overline{k}$ (resp.\,the normal closure of $k'_{s}$ over $k$ in $\overline{k}$).
Furthermore, write $k'_{p}$ (resp.\,$k'_{s,p}$) for the field $k'_{n}\cap k^{p^{-\infty}}$ (resp.\,the field $k'_{s,n}\cap k^{p^{-\infty}}$).
Then the following hold:
\begin{enumerate}
\item
$k'_{p}=k'_{s,p}$.
\item
$k_{s}$ and $k'_{s,p}$ are linearly disjoint over $k$ and we have $k'_{s,n}=k_{s}k'_{s,p}$.
\end{enumerate}
\label{normalvs}
\end{lem}

\begin{proof}
Note that we have $k'_{s}=k'k_{s}$.
Since $k'_{n}$ is the composite field of the separable closure of $k$ in $k'_{n}$ and $k'_{p}$, we have
$$k'_{s}\subset k'_{n}k_{s}=k'_{p}k_{s}\subset k'_{s,n}.$$
Since $k'_{n}$ and $k_{s}$ are normal over $k$, we have $k'_{p}k_{s}=k'_{s,n}$.
Since $k'_{p}$ and $k_{s}$ are linearly independent over $k$, Lemma \ref{normalvs} holds.
\end{proof}

\begin{rem}
We obtain an extension field $k_{i,p}$ of $k$ for each $i\in I$ as $k'_{p}$ by replacing $k\subset K'$ in Lemma \ref{normalvs} with $k\subset K_{i}$.
Note that we cannot consider the intersection of $k_{i} \;(i\in I)$, although we can consider the intersection of $k_{i,p} \;(i\in I)$.
\end{rem}

\begin{thm}
The inclusions $\{\iota_{i}\}$ satisfy property (F) if the intersection of $k_{i,p} \,(i\in I)$ coincides with $k$.
\end{thm}

\begin{proof}
This follows from Proposition \ref{alg} and Lemma \ref{normalvs}.
\end{proof}

\begin{prop}
Suppose that the degree of extension $k^{p} \subset k$ is $\leq 1$.
\begin{enumerate}
\item Any algebraic extension of $k$ is a linear disjoint sum of an algebraic separable extension of $k$ and a purely inseparable extension of $k$.
\item
The inclusions $\{\iota_{i}\}$  satisfy property (F) if and only if the algebraic closure of $k$ in each $k_{i}$ is separable over $k$.
\end{enumerate}
\end{prop}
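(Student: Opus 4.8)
The plan is to prove part 1 first, by the structure theory of purely inseparable extensions under the hypothesis, and then to deduce part 2 from part 1 together with Proposition \ref{alg}. The hypothesis that the degree of $k^{p}\subset k$ is $\leq 1$ means that either $k$ is perfect or $k=k^{p}(t)$ for a single $p$-basis element $t$. First I would record the key consequence: the purely inseparable algebraic extensions of $k$ inside a fixed algebraic closure form the single tower $k\subset k^{p^{-1}}\subset k^{p^{-2}}\subset\cdots$, with $k^{p^{-n}}=k(t^{p^{-n}})$ and $[k^{p^{-n}}:k^{p^{-n+1}}]=p$. Indeed, any purely inseparable $M/k$ of exponent $\leq n$ lies in $k^{p^{-n}}$, and since every step of this tower has prime degree the tower is linear, so $M=k^{p^{-m}}$ for some $m$. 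For an arbitrary algebraic $K/k$, I would then take $L$ to be the separable closure of $k$ in $K$, so that $K/L$ is purely inseparable. The crucial point is that $t$ remains a $p$-basis of $L$ (separable algebraic extensions preserve the $p$-basis, equivalently $[L:L^{p}]=[k:k^{p}]$ and $L\otimes_{k}k^{p^{-1}}$ is a field); hence the purely inseparable extensions of $L$ form the tower $L\subset L(t^{p^{-1}})\subset\cdots$, so $K=L(t^{p^{-m}})=L\cdot k^{p^{-m}}$. Since $L/k$ is separable and $k^{p^{-m}}/k$ is purely inseparable, they are linearly disjoint over $k$, which is the asserted decomposition.

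For part 2, by Proposition \ref{alg} I may replace each $K_{i}$ by its algebraic closure $k_{i}$ of $k$, so I assume $K_{i}=k_{i}$ is algebraic over $k$. For the direction that ``each $k_{i}/k$ separable'' implies (F): given any separable $L_{i}/k_{i}$, each composite $L_{i}/k$ is separable, and a subfield $l\subseteq\prod_{i}L_{i}$ that is algebraic over the diagonal $k$ maps injectively into each factor $L_{i}$ (a nonzero ring homomorphism out of a field carrying the global unit), so $l$ is $k$-isomorphic to a subfield of the separable extension $L_{1}/k$; therefore $l/k$ is separable, and (F) holds. For the converse I must show, assuming (F), that each $k_{i}/k$ is separable, and I would argue by contraposition: supposing some $k_{i}/k$ fails to be separable, I use part 1 to write $k_{i}=k_{i}^{\mathrm{sep}}\cdot k^{p^{-r}}$ with $r\geq 1$, so that the canonical inseparable generator $t^{p^{-1}}\in k^{p^{-1}}$ lies in $k_{i}$. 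Using this generator I would build, inside $\prod_{i}L_{i}$ for suitable separable extensions $L_{i}\supseteq k_{i}$, a subfield $l\supseteq k(t^{p^{-1}})$ that is inseparable over the diagonal $k$, contradicting (F); hence every $k_{i}/k$ must be separable.

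The hard part will be the converse, specifically producing the witness $l$ inside the product ring so that it is genuinely inseparable over the diagonal $k$ while admitting a compatible embedding into each factor. This is exactly where the hypothesis $[k:k^{p}]\leq 1$ is indispensable: it forces every inseparable algebraic extension of $k$ to contain the one canonical subfield $k^{p^{-1}}=k(t^{p^{-1}})$, so the inseparable generator can be pinned down uniformly and is not merely isomorphic but literally common across the relevant factors. Without this chain structure one could not guarantee a single inseparable element shared by all the components, and the decomposition furnished by part 1 is precisely what lets me isolate $t^{p^{-1}}$ inside each $k_{i}$ and transport it into the product. I therefore expect the bulk of the work to consist in making this product-witness construction precise and verifying that the resulting $l$ is indeed algebraic over and inseparable over the diagonal $k$.
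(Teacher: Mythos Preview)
The paper presents this proposition in a subsection explicitly marked ``without proofs,'' so there is no argument in the paper to compare against. Your treatment of part~1 is correct, as is the forward implication in part~2.

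Your strategy for the converse in part~2, however, cannot succeed; in fact the assertion with ``each'' is false as soon as $m\ge 2$. Take $k=\mathbb{F}_p(t)$, $K_1=k(t^{1/p})$, and $K_2=k$. Then $k_1$ is inseparable over $k$, yet the family $\{\iota_1,\iota_2\}$ satisfies~(F): any subfield $l\subset L_1\times L_2$ projects injectively into $L_2$ (a nonzero element with vanishing second coordinate would be a nonzero non-unit of $l$), and $L_2/K_2=L_2/k$ is separable, so $l/k$ is separable. Your contrapositive assumes only that \emph{some} $k_{i_0}/k$ is inseparable and then attempts to place $t^{1/p}$ inside $\prod_i L_i$; but if another index $j$ has $k_j/k$ separable, every separable extension $L_j/K_j$ is separable over $k$ and hence contains no $p$-th root of $t$, so there is no admissible $j$-th coordinate for your witness, and the projection $l\hookrightarrow L_j$ forces $l/k$ to be separable regardless. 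The step you flagged as ``the hard part'' is therefore not merely hard but impossible in this situation.

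What your diagonal-witness construction \emph{does} establish, once made precise, is the version with ``some'' in place of ``each'': under $[k:k^p]\le p$, property~(F) holds iff at least one $k_i/k$ is separable, equivalently $\bigcap_i k_{i,p}=k$. Indeed, if every $k_i/k$ is inseparable then part~1 puts the unique element $t^{1/p}$ into each $k_i$, and the diagonal copy of $k(t^{1/p})$ inside $\prod_i k_i$ violates~(F). This is consistent with Remark~\ref{(F)geom} and with the lemma immediately preceding the proposition; the word ``each'' in the printed statement appears to be a slip.
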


\begin{proof}
Assertion 2 follows from assertion 1.
Let $M$ (resp.\,$M_{i}$; $M_{s}$) be an algebraic extension field of $K$ (resp.\,the purely inseparable closure of $K$ in $M$; the separable closure of $K$ in $M$).
We assume that $M$ is finite over $K$ and show that $[M:M_{s}]=[M_{i}:K]$, from which assertion 1 follows.
Since we have $[M_{s}^{1/p}:M_{s}]=[K^{1/p}:K]\leq p$, $M$ is inseparable over $K$ if and only if $K^{1/p}\subset M$.
Then $[M:M_{s}]=[M_{i}:K]$ follows from induction on $[M:M_{s}]$.
\end{proof}

\begin{exam}
We give some examples of fields $k$ such that the degree of extension $k^{p} \subset k$ is $\leq 1$.
\begin{enumerate}
\item A perfect field.
\item An extension field of a perfect field with transcendental degree $1$.
\item A field of Laurent series over a perfect field.
\end{enumerate}
\end{exam}

\section{Appendix 2: geometrically connected fibers}
In this section, we discuss the homotopy exact sequence (\ref{exac}) in the case where the geometric point $\overline{\eta}$ is not necessarily over the generic point of $S$.

Let $X, S, f$, $\overline{\eta}$, and $\overline{x}$ be as in Section \ref{suffsec}.
Consider a geometric (not necessarily generic) point $\overline{\eta'}$ of $S$.
Write $\widetilde{S}_{\overline{\eta'}}$ for the strict localization of $S$ at $\overline{\eta'}$ and fix an $S$-morphism $\overline{\eta} \rightarrow \widetilde{S}_{\overline{\eta'}}$.

\begin{rem}
\begin{enumerate}
\item If the sequence (\ref{exac}) is exact, the sequence
\begin{equation}
\pi_{1}(X\times_{S}\widetilde{S}_{\overline{\eta'}}, \overline{x})\rightarrow \pi_{1}(X, \overline{x}) \rightarrow \pi_{1}(S, \overline{x}) \rightarrow 1
\label{exachen}
\end{equation}
is exact.
\item
Suppose $f$ satisfies the assumptions other than condition (R) in Theorem \ref{suff} and instead satisfies the following condition (R'):\\
(R'): Let $s$ be a point of $S\setminus\mIm(\overline{\eta'})$ whose local ring is of dimension $1$.
Write $\xi_{i}\;(i\in I)$ for the generic points of the scheme $f^{-1}(s)$, $e_{i}$ for the multiplicity of $\xi_{i}$, and $k(\xi_{i})$ (resp.\,$k(s)$) for the residual field of $\xi$ (resp.\,$s$).
Then $\mathrm{gcd}_{i\in I}e_{i}=1$ and the inclusions $k(s) \hookrightarrow k(\xi_{i})\;(i\in I)$ satisfy property (F).\\
Then, by using an argument similar to that given in Section \ref{suffsec}, we can show that the sequence (\ref{exachen}) is exact.
\item
From Remarks \ref{rem1}.1 and  \ref{rem1}.2, the homomorphism $\pi_{1}(X\times_{S}\overline{\eta}, \overline{x}) \to \pi_{1}(X\times_{S}\widetilde{S}_{\overline{\eta'}}, \overline{x})$ is not surjective in general.
\end{enumerate}
\label{rem1}
\end{rem} 

Take a geometric point $\overline{x'}$ of $X\times_{S}\overline{\eta'}$.

\begin{rem}
\begin{enumerate}
\item
Suppose that the morphism $X\times_{S}\widetilde{S}_{\overline{\eta'}} \rightarrow \widetilde{S}_{\overline{\eta'}}$ is proper and flat.
Note that $\Spec (f\times \mathrm{id}_{\widetilde{S}_{\overline{\eta'}}})_{\ast}O_{X\times_{S}\widetilde{S}_{\overline{\eta'}}}$ is connected and normal and the morphism
$$\pi:\Spec (f\times \mathrm{id}_{\widetilde{S}_{\overline{\eta'}}})_{\ast}O_{X\times_{S}\widetilde{S}_{\overline{\eta'}}}\to \widetilde{S}_{\overline{\eta'}}$$
is finite.
Since $X\times_{S}\overline{\eta}$ is connected, $\pi$ is a universally homeomorphism and therefore the scheme $X\times_{S}\overline{\eta'}$ is connected.
Then the homomorphism $\iota: \pi_{1}(X\times_{S}\overline{\eta'}, \overline{x'}) \rightarrow\pi_{1}(X\times_{S}\widetilde{S}_{\overline{\eta'}}, \overline{x'})$ is an isomorphism.
\item
$\iota$ in Remark \ref{rem2}.1 is neither surjective nor injective in general.
\end{enumerate}
\label{rem2}
\end{rem}

\begin{cor}
Suppose that $f$ is proper and flat (cf.\,Remark \ref{rem2}.1).
Then the sequence
\begin{equation}
\pi_{1}(X\times_{S}\overline{\eta'}, \overline{x'})\rightarrow \pi_{1}(X, \overline{x'}) \rightarrow \pi_{1}(S, \overline{x'}) \rightarrow 1
\label{exaccl}
\end{equation}
is exact if condition (R') is satisfied.
\label{appcor}
\end{cor}

\begin{proof}
Corollary \ref{appcor} follows from Remark \ref{rem1}.2 and Remark \ref{rem2}.1.
\end{proof}

Suppose that $X\times_{S}\overline{\eta'}$ is connected.
We need an ad hoc assumption (cf.\,the third condition in Proposition \ref{adhoc}) to make the sequence (\ref{exaccl}) exact by Remark \ref{rem1}.3 and Remark \ref{rem2}.2.

\begin{prop} (cf.\,\cite{Ho} Proposition 1.10)
Suppose that the following conditions are satisfied:
\begin{itemize}
\item
The morphism $f$ is flat or the scheme $S$ is regular.
\item
$f$ satisfies property (R').
\item
Let $X' \to X$ be a connected finite \'etale covering space.
Write $K_{X'/S}$ for the algebraic separable closure of the function field of $S$ in the function field of $X'$ and $N_{X'/S}$ for the normalization of $S$ in $K_{X'/S}$.
For any $S$-morphism $\overline{\eta'} \rightarrow N_{X'/S}$, the scheme $\overline{\eta'}\times_{N_{X'/S}}X'$ is connected.
\end{itemize}
Then the sequence (\ref{exaccl}) is exact.
\label{adhoc}
\end{prop}

\begin{proof}
It suffices to show that the implication $4 \Rightarrow 3$ in Proposition \ref{essential} holds if we replace $\overline{\eta}$ with $\overline{\eta'}$.
Let $X'$ be as in condition 3 in Proposition 1.5.
Since the number of the connected components of the scheme $X_{\overline{\eta'}}\times_{X}X' =\overline{\eta'}\times_{S}X'=(\overline{\eta'}\times_{S}N_{X'/S})\times_{N_{X'/S}}X'$ coincides with the covering degree of $N_{X'/S} \rightarrow S$ by the third condition, the assertion holds.
\end{proof}

\label{app2}

\end{document}